\newtheorem{thm}{Theorem}
\newtheorem{lem}[thm]{Lemma}
\newtheorem{prop}[thm]{Proposition}
\theoremstyle{mydefinition}
\newtheorem{rema}[thm]{Remark}
\theoremstyle{myremark}
\newcommand\MATHCAL[1]{\mathcal{#1}}
\def\pa[1]{\frac{\partial}{\partial x}}
\def\diag{\mathrm{diag}}
\newcommand{\qfac}[1]{(q)_n}
\title[Shifted Periodic Continued Fractions]{Hankel Determinants and Shifted Periodic Continued Fractions}
\author{Ying Wang$^1$, Guoce Xin$^{2,*}$ and Meimei Zhai$^3$}
 \address{ $^{1,2}$School of Mathematical Sciences, Capital Normal University,
 Beijing 100048, PR China\\
 $^3$Rizhao No.1 Middle School of Shandong, Shandong Province 276800, PR China}
 \email{$^1$\texttt{wangying.cnu@gmail.com}\ \  \& $^2$\texttt{guoce.xin@gmail.com}\ \& $^3$\texttt{zhaimeimei3@163.com}}
\date{June 26, 2018}
\begin{document}

\maketitle

\begin{abstract}
Sulanke and Xin developed a continued fraction method that applies to evaluate Hankel determinants corresponding to quadratic generating functions.
We use their method to give short proofs of Cigler's Hankel determinant conjectures, which were proved recently by Chang-Hu-Zhang using direct determinant computation.
We find that shifted periodic continued fractions arise in our computation. We also discover and prove some new nice Hankel determinants relating to lattice paths
with step set $\{(1,1),(q,0), (\ell-1,-1)\}$ for integer parameters $m,q,\ell$. Again shifted periodic continued fractions appear.
\end{abstract}

\noindent
\begin{small}
 \emph{Mathematic subject classification}: Primary 15A15; Secondary 05A15, 11B83.
\end{small}

\noindent
\begin{small}
\emph{Keywords}: Hankel determinants; continued fractions.
\end{small}

\section{Introduction}
%
%
%
%
%

Let $A=(a_0,a_1,a_2\cdots)$ be a sequence, and denote by $A(x)=\sum_{n\geq0}a_nx^n$ its generating function. Define the shifted Hankel matrices (or determinants) of $A$ or $A(x)$  by
 $$\mathcal{H}_n^{(k)}(A)= \mathcal{H}_n^{(k)}(A(x)) =(a_{i+j+k})_{0\leq i,j\leq n-1},\qquad \text{ and } H_n^{(k)}(A) = \det \mathcal{H}_n^{(k)}(A).$$
 We shall write $H_n(A)$ for $H_n^{(0)}(A)$ and $H_n^1(A)$ for $H_n^{(1)}(A)$.
In convention we set $H_0^{(k)}=1$. If it is clear from the context, we will omit the $A$ or $A(x)$.
For instance, the sequence of Catalan number: $1,\ 1,\  2,\  5,\  14,\  42,\  132,\  429,\  1430,\  4862\ \cdots$ yields
\begin{equation*}
\MATHCAL{H}_1=\left[1\right],
\MATHCAL{H}_2={
\left[ \begin{array}{cc}
1 & 1 \\
1& 2
\end{array}
\right ]},
\MATHCAL{H}_3=\left[ \begin{array}{ccc}
1 & 1 & 2\\
1 & 2 & 5\\
2 & 5 & 14
\end{array}
\right ],
\MATHCAL{H}_4=\left[ \begin{array}{cccc}
1 & 1 & 2 & 5\\
1 & 2 & 5 & 14\\
2 & 5 &14 & 42\\
5 & 14&42 &132
\end{array}
\right ].
\end{equation*}

In recent years, a considerable amount of work has been devoted to Hankel determinants of path counting numbers, especially for weighted counting of lattice paths with up step $(1,1)$, level step $(\ell,0), \ \ell \ge 1$, and down step $(m-1,-1), \ m\ge 2$. Many of such Hankel determinants have attractive compact closed formulas, such as that of Catalan numbers \cite{J.M.E.Mays J.Wojciechowski}, Motzkin numbers \cite{M. Aigner,J.Cigler}, and Schr\"{o}der numbers \cite{R. A Brualdi and S. Kirkland.}. For instance, Motzkin numbers count lattice paths from $(0,0)$ to $(n,0)$ with step set $\{(1,1), (1,0), (1,-1)\}$ that never go below the horizontal axis. Partial Motzkin paths (similar to Motzkin paths but from $(a,0)$ to $(n,b)$) were considered in \cite{J.Cigler-C.Krattenthaler} and \cite{C.Krattenthaler-D.Yaqubi}, where many nice determinant formulas were discovered.

Many methods have been developed for evaluating Hankel determinants using their corresponding generating functions. One of the basic tools is
the method of continued fractions, either by J-fractions in Krattenthaler \cite{C. Krattenthaler.} or Wall \cite{H. S. Wall} or by S-fractions in
Jones and Thron \cite[Theorem 7.2]{W. B. Jones and W. J. Thron}. However, both of these methods need the condition that the determinant
can never be zero, a condition not always present in our study.

Our point of departure is that such lattice paths have quadratic generating functions, so that Sulanke-Xin's continued fraction method applies to evaluate
their Hankel determinants. In \cite{R.Sulanke and Xin} Sulanke and Xin used Gessel-Xin's continued fraction method \cite{Gessel and Xin} to evaluate the Hankel determinants for lattice paths with step set $\{(1,1), (3,0), (1,-1)\}$. 
\begin{prop}\cite{R.Sulanke and Xin}
Let $F(x)$ be determined by $ F(x)=1+x^3F(x)+x^2F(x)^2 $. Then
\[(H_n)_{n\geq1}^{14}=(1,1,0,0,-1,-1,-1,-1,-1,0,0,1,1,1).\] Moreover, if $m, n\geq0 $ with $(n-m)\equiv_{14}0,$ then $H_m=H_n.$
\end{prop}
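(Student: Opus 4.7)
The plan is to apply the Sulanke--Xin continued fraction method \cite{R.Sulanke and Xin} directly to the quadratic equation $F = 1 + x^3 F + x^2 F^2$ and track the Hankel determinants through each reduction step. Rewriting the defining equation as $F = 1/(1 - x^3 - x^2 F)$ already exhibits $F$ as a one--step continued fraction in itself, and this is the entry point for an iteration in which, at stage $i$, the current power series $F_i$ (with $F_i(0)=1$) is expressed in the form
\[
F_i(x) = \frac{1}{u_i(x) - v_i(x)\,F_{i+1}(x)}
\]
for suitable polynomials $u_i, v_i$ chosen so that $F_{i+1}$ is again a power series with constant term $1$ satisfying its own quadratic equation, obtained from the quadratic for $F_i$ by elementary algebraic manipulation.

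The Hankel determinant bookkeeping is governed by the Gessel--Xin reduction lemma \cite{Gessel and Xin}: if $G = 1/(u - v\tilde G)$ with $u,v$ polynomials whose low--degree terms are controlled, then $H_n(G)$ equals $\pm x^{a_n}$ times some $H_{n-k}^{(j)}(\tilde G)$, where the shift parameters $k,j$ and the exponent $a_n$ depend only on $(u,v)$. Iterating, $H_n(F)$ becomes a telescoping product of such factors indexed by the sequence $\{(u_i, v_i)\}$. I would therefore compute the pairs $(u_1,v_1), (u_2,v_2), \ldots$ explicitly and wait for the pair to return to an earlier value (up to a controlled rescaling); this return point is the period of the underlying continued fraction, and by construction it forces a corresponding periodicity of the sequence $H_n(F)$.

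Having detected periodicity, the concrete period $14$ and the explicit block $(1,1,0,0,-1,-1,-1,-1,-1,0,0,1,1,1)$ would be pinned down by computing the first several $F_i$ by hand, or equivalently by expanding $F$ directly to order $x^{28}$ and evaluating the first fourteen Hankel determinants. The formula $H_m = H_n$ when $(n-m) \equiv 0 \pmod{14}$ is then an immediate consequence of the periodic closure of the recursion on $(u_i, v_i)$.

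The main obstacle is precisely the third step, identifying the period: the classical $J$--fraction formula $H_n = \prod \beta_j^{n-j}$ breaks down at exactly the indices where $H_n = 0$, and the Sulanke--Xin reduction must therefore be allowed to pass between $H_n$ and the shifted Hankel determinant $H_n^{(1)}$ at the degenerate stages. Tracking these jumps, together with the accumulated signs and index shifts through all fourteen iterations, is what turns a naive periodic continued fraction into the genuinely \emph{shifted} periodic object of the paper's title; verifying that the bookkeeping closes up after exactly fourteen iterations --- rather than at some proper divisor or a multiple of $14$ --- is the most delicate computational point of the argument.
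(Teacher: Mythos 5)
Your overall plan --- iterate the quadratic transformation until the functional equation recurs, then telescope the determinant relations --- is the paper's method, but two of your concrete claims are wrong in ways that would derail the execution. First, you normalize every iterate as $F_i = 1/(u_i - v_i F_{i+1})$ with $F_i(0)=1$. The transformation must instead be applied in the general form $F_i(x) = x^{d}/(u(x) + x^k v(x) F_i(x))$ with $d\ge 0$ allowed to be positive: for this $F$, the third iterate has numerator $x^2\cdot(\text{unit})$, so $F_2(0)=0$ and your normalized form cannot represent it. That $d=2$ step is not a nuisance but the crux: case (iii) of Proposition \ref{xinu(0,i,ii)} gives $H_{n-3}(\tau(F_2)) = (-1)^{\binom{3}{2}}H_n(F_2)$, and this single step supplies both the index jump of $3$ (which is what produces the two zeros in each block of fourteen) and the minus sign. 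No passage between $H_n$ and $H_n^{(1)}$ is needed anywhere; vanishing determinants are absorbed automatically by the $d\ge 1$ case, which is exactly why this method, unlike the classical J-fraction formula, tolerates zeros. (Also, the scalar factors in the reduction are constants such as $u(0)^n$, not powers of $x$ as your statement of the lemma has them.)

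Second, your announced ``most delicate computational point'' --- verifying closure ``after exactly fourteen iterations'' --- searches for the wrong thing: the continued fraction closes after \emph{five} applications of $\tau$, i.e.\ $F_5=F_0$ as in \eqref{e-periodic-F}. The index shifts along the cycle are $1+1+3+1+1=7$ and the accumulated sign is $-1$, giving $H_n(F)=-H_{n-7}(F)$; together with the directly computed initial values $H_0,\dots,H_6 = 1,1,1,0,0,-1,-1$, this yields anti-period $7$ and hence the stated period $14 = 2\cdot 7$. So $14$ is not the length of any cycle of iterates; it is twice the per-cycle index shift, forced by the sign, and a search for a return of $(u_i,v_i)$ at step $14$ conflates three distinct quantities (cycle length $5$, shift $7$, period $14$). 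The paper carries out precisely this five-step computation in Section \ref{sec:periodic} for the weighted equation $F = 1+tx^2F+(t+1)x^3F+x^2F^2$, which specializes to the present one at $t=0$. Relatedly, this proposition is the paper's prototype of a \emph{plain} periodic continued fraction: ``shifted periodic'' in the title refers to $p$-indexed families $F_i^{(p)}$ with $F_q^{(p)}=F_0^{(p+1)}$ (as for the Catalan determinants in Section \ref{sec:catalan}), not to jumps to shifted Hankel determinants, so your closing paragraph mischaracterizes the structure you would be verifying.
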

\noindent
They indeed
defined a quadratic transformation $\tau$ (see Proposition \ref{xinu(0,i,ii)}) such that
there is a simple relation between the Hankel determinants of $F$ and $\tau(F)$. Now if we let
$F_0(x)=F(x)$ and apply the transformation $\tau$, then we obtain the following periodic continued fractions:
\begin{align} \label{e-periodic-F}
  F_0(x) \mathop{\longrightarrow}\limits^\tau F_1(x) \mathop{\longrightarrow}\limits^\tau \cdots \mathop{\longrightarrow}\limits^\tau F_5(x)=F_0(x).
\end{align}
Thus one easily deduce that $H_n(F)=-H_{n-7}(F).$ This method is now called Sulanke-Xin's continued fraction method,
and has been applied systematically in \cite{R.Sulanke and Xin}. It was later applied to solve Somos recurrence problem in \cite{X. K. Chang-X. B. Hu and G. Xin,Xin Somos4}.

Our basic tools are Gessel-Xin's and Sulanke-Xin's continued fraction methods.
These methods work nicely for quadratic generating functions, and they allow the Hankel determinants to be zero.
We find it natural to define a \emph{shifted periodic continued fractions} to be of the following form:
$$F_0^{(p)}(x) \mathop{\longrightarrow}\limits^\tau F_1^{(p)}(x)\mathop{\longrightarrow}\limits^\tau \cdots \mathop{\longrightarrow}\limits^\tau F_q^{(p)}(x)=F_0^{(p+1)}(x), $$
where $p$ is an additional integer parameter. It reduces to periodic continued fractions if $F_i^{(p)}$ is independent of $p$.
Once the pattern is guessed, the proof is automatic and the Hankel determinants are easy to evaluate. For instance, our first
example of periodic continued fractions is \eqref{e-periodic-F}. An example of shifted periodic continued fractions arise naturally from the evaluation
of shifted Hankel determinants of Catalan numbers. See Section \ref{sec:catalan}.

One purpose of this paper is to give short proofs of Cigler's conjectures \cite{J.Cigler} on the Hankel determinants of
the three sequences $\{c(n,m,a,b)\}_{n=0}^\infty,\{C(n,m,a,b,t)\}_{n=0}^\infty,$ $\{g(n,m,a,b)\}_{n=0}^\infty,$ whose generating functions
respectively satisfy the functional equations
 \begin{align}
 f_m(x,a,b) &=\sum_{n\geq 0} c(n,m,a,b) x^n  \nonumber\\
 &= 1+a x  f_m(x,a,b)+b x^m  f_m(x,a,b)^2,\label{generating function f} \\
 F_m(x,a,b,t) &=\sum_{n\geq 0} C(n,m,a,b,t) x^n \nonumber\\
 &= 1+(a+t) x  F_m(x,a,b,t)+b x^m F_m(x,a,b,t) f_m(x,a,b),\label{generating function F} \\
 G_m(x,a,b) &=\sum_{n\geq 0} g(n,m,a,b) x^n \nonumber\\
 &= 1+a x  G_m(x,a,b)+2 b x^m G_m(x,a,b) f_m(x,a,b),\label{generating function G}
 \end{align}
where $a, b, t$ are arbitrary complex numbers and $m$ is a fixed positive integer.
Cigler  first considered the Hankel determinants $H_n^{(r)}(f_m(x)),H_n^{(r)}(F_m(x)),H_n^{(r)}(G_m(x))$
for $r=0,1,2.$ He used orthogonal polynomials method \cite{J.Cigler, C. Krattenthaler.,X.G.},
Gessel-Viennot-Lindstr\"{o}m theorem \cite{D.M. Bressoud and A.M. Americaof.,I. Gessel and G. Viennot.,R.Sulanke and Xin}
and the continued fraction method \cite{R.Sulanke and Xin,Gessel and Xin} to compute the Hankel determinants and successfully obtained some results.
He also listed several conjectures, which have been proved by Chang-Hu-Zhang \cite{X. K. Chang. X. B. Hu and Y. N. Zhang} using direct determinants computation. Our proofs are
complete and short.

We remark that Krattenthaler described several methods to evaluate
determinants and listed many known determinant evaluations in \cite{C. Krattenthaler.,C. Krattenthaler.1999}.
Usually if we know the Hankel determinants $H_n(A(x))$ and $H_n^1(A(x))$, then $H_n^{(k)}(A(x))$ (if never vanish) can be recursively computed using the condensation formula for determinants \cite{C. Krattenthaler.,R.A. Brualdi and H. Schneider.,C.L. Dodgson.}:
\begin{align}
  \label{e-condensation}
  H_n^{(k)}(A(x))H_{n-2}^{(k+2)}(A(x))=H_{n-1}^{(k)}(A(x))H_{n-1}^{(k+2)}(A(x))-(H_{n-1}^{(k+1)}(A(x)))^2.
\end{align}

The paper is organized as follows. Section \ref{sec:Gessel-Xin-method} introduces Gessel-Xin's continued fraction method. We also derive some lemmas for proving Cigler's ex-conjectures.
Section \ref{sec:sulanke-xin-trans} introduces Sulanke-Xin's continued fraction method, especially the quadratic transformation $\tau$. As an application, we evaluate shifted Hankel determinants for Catalan numbers.
We will see that shifted periodic continued fractions arise naturally. In Section \ref{sec:short proof 2}, we complete the proof of Cigler's ex-conjectures, where shifted periodic continued fractions also appear.
Section \ref{sec:periodic} includes two examples of artificial lattice paths, whose generating functions have periodic continued fractions. Section \ref{sec:four-classes} considered
a special family of generating functions that are related to lattice paths with step set $\{(1,1),(q,0), (\ell-1,-1)\}$ for integer parameters $m,q,\ell$. We find nice Hankel determinants for four classes of  parameters $m,q,\ell$. Again
shifted periodic continued fractions appear.

\section{Gessel-Xin's continued fraction method\label{sec:Gessel-Xin-method}}
In this section, we illustrate Gessel-Xin's continued fraction method. This method is based on the evaluation of determinants by using generating functions.

\subsection{Basic rules}
For an arbitrary two variable generating function $D(x,y)=\sum_{i,j=0}^{\infty}d_{i,j}x^iy^j$, let $[D(x,y)]_n$ be the determinant of the $\ n\times n$ matrix
$(d_{i,j})_{0\leq{i,j}\leq{n-1}}.$ It may be intuitive to see that ordinary and shifted Hankel determinants, $H_n(A)$ and $H^1_n(A)$, can be expressed as
\[H_n(A)=\Big[\frac{xA(x)-yA(y)}{x-y}\Big]_n,\qquad H^1_n(A)=\Big[\frac{A(x)-A(y)}{x-y}\Big]_n.\]

There are three simple rules to transform the determinant $[D(x, y)]_n$ to another determinant. See \cite{Gessel and Xin} for applications.

\noindent
\emph{Constant Rules.} Let $c$ be a non-zero constant. Then
$$[cD(x, y)]_n=c^n[D(x, y)]_n,\quad  \text{ and} \quad [D(cx, y)]_n = c^{\binom{n}{2}}[D(x, y)]_n=[D(x, cy)]_n.$$
\emph{Product Rules.} If $u(x)$ is any formal power series with $u(0) = 1$, then
$$[u(x)D(x, y)]_n = [D(x, y)]_n=[u(y)D(x, y)]_n.$$
\emph{Composition Rules.} If $v(x)$ is any formal power series with $v(0) = 0$ and $v'(0) = 1$, then
$$[D(v(x), y)]_n = [D(x, y)]_n=[D(x, v(y))]_n.$$

The constant rules are clear. The product and composition rules hold because the transformed determinants are obtained
from the original one by a sequence of elementary row or column operations. 

In our applications, only the constant rules and multiplication  rules will be used.
In addition, we use the following simple fact of determinants:
\begin{align}
 [c + D(x,y)]_n = c \det (d_{i,j} )_{1\le i,j \le n-1} + [D(x,y)]_n. \label{e-cDxy}
\end{align}

\subsection{Simple derivation of Cigler's ex-conjectures: part I}
We will give simple evaluation of the Hankel determinants of $f_m, F_m$ and $G_m$, as defined in \eqref{generating function f},
\eqref{generating function F} and \eqref{generating function G}, respectively. The computation is divided into two parts.
This subsection is the first part, which includes some lemmas proved by the product rules.
Section \ref{sec:short proof 2} is the second part, which includes the detailed proofs. Some of the results need to use shifted periodic continued fractions.

We will use the fractional representation for the defining functional equation.
For instance, \eqref{generating function f} is rewritten in the following form
\begin{align}
 f_m(x,a,b) &= \frac{1}{1-ax-b x^m f_m(x,a,b)}. \label{e-cf-fm}
\end{align}

We first evaluate the following Hankel determinants $H_n(x^jf_m)$.
\begin{lem}\label{f_m(x))}
For $m\geq 2$ and $0\leq j\leq m-2$, we have
\begin{align*}
H_{mn}^{-j}(f_m)&=H_{mn}(x^{j}f_m)=(-1)^{\binom{j+1}{2}n}(-1)^{\binom{m-j-1}{2}n}b^{mn^2-(j+1)n}\\
H_{mn+j+1}^{-j}(f_m)&=H_{mn+j+1}(x^{j}f_m)=(-1)^{\binom{j+1}{2}(n+1)}(-1)^{\binom{m-j-1}{2}n}b^{mn^2+(j+1)n}
\end{align*}
and $H_n^{-j}(f_m)=H_{n}(x^{j}f_m)=0$ for all other $n$.

For $j=m-1$ we have
\begin{align}
  H_{n}(x^{m-1} f_m)= (-1)^{\binom{m}{2}} b^{n-m} H^1_{n-m}(f_m). \label{e-fm-j=m-1}
\end{align}
\end{lem}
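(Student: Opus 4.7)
The plan is to apply Gessel-Xin's product rule to the two-variable generating function
\[ D_{k}(x,y) := \frac{x^{k}f_m(x) - y^{k}f_m(y)}{x-y}, \]
for which $[D_{j+1}(x,y)]_n = H_n(x^j f_m)$. Setting $P(x) := 1 - ax - bx^m f_m(x) = 1/f_m(x)$, we have $P(0)=1$, so the product rule gives $[D_{j+1}]_n = [P(x)P(y)\,D_{j+1}]_n$. The numerator $x^{j+1}P(y) - y^{j+1}P(x)$ expands cleanly using the definition of $P$; dividing by $x-y$ one obtains
\[ \sum_{i=0}^{j} x^{i}y^{j-i} \ -\ a\sum_{i=0}^{j-1} x^{i+1}y^{j-i} \ +\ b\,x^{j+1}y^{j+1}\, D_{m-j-1}(x,y) \]
for $0 \le j \le m-2$; when $j = m-1$ the last summand is replaced by $bx^{m}y^{m}\cdot (f_m(x)-f_m(y))/(x-y)$, whose $n$-th determinant equals $b^{n-m}H^1_{n-m}(f_m)$, yielding \eqref{e-fm-j=m-1} once the polynomial-block constant is computed.

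The key observation is that the polynomial part has $x$-degree and $y$-degree each at most $j$ (resp.\ $m-1$), while the prefactor $x^{j+1}y^{j+1}$ (resp.\ $x^{m}y^{m}$) forces the remaining term to contribute only at matrix indices $\ge j+1$ (resp.\ $\ge m$). Hence the $n \times n$ matrix is block-diagonal. The top-left block $M_{j+1}$ has $1$'s on the anti-diagonal $r+s=j$ and $-a$'s on the sub-anti-diagonal $r+s=j+1$; cofactor expansion along column $0$ (whose only non-zero entry is the $1$ at row $j$) gives the recursion $\det M_{j+1} = (-1)^{j}\det M_{j}$, so $\det M_{j+1} = (-1)^{\binom{j+1}{2}}$ by induction. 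The bottom-right block equals $b$ times the coefficient matrix of $D_{m-j-1}$ on indices $\{0,\ldots,n-j-2\}^{2}$, with determinant $b^{n-j-1}H_{n-j-1}(x^{m-j-2}f_m)$. Therefore, for $n \ge j+1$ and $0 \le j \le m-2$,
\[ H_n(x^j f_m) = (-1)^{\binom{j+1}{2}}\, b^{n-j-1}\, H_{n-j-1}(x^{m-j-2}f_m). \]

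Because $j \mapsto m-j-2$ is an involution on $\{0,\ldots,m-2\}$, iterating the recursion twice yields the period-$m$ relation
\[ H_n(x^j f_m) = (-1)^{\binom{j+1}{2}+\binom{m-j-1}{2}}\, b^{\,2n-m-j-1}\, H_{n-m}(x^j f_m). \]
Iterating down to the base values $H_0 = 1$ and $H_{j+1}(x^j f_m) = (-1)^{\binom{j+1}{2}}$, and summing the arithmetic-progression exponents of $b$ along $n, n-m, n-2m, \ldots$, gives exactly the two closed forms claimed at $n = mn_0$ and $n = mn_0 + j+1$. All remaining $n$ give zero: in the base range $1 \le n \le j$ the top row of the polynomial matrix is identically zero (its only candidate entry sits at column $s=j$, which is out of range), and this vanishing propagates through the recursion to every other residue class of $n$ modulo $m$.

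The main obstacle is establishing the block-diagonal decomposition cleanly: one has to track the bi-degree support of each summand against the index range of the $n \times n$ matrix and confirm that the polynomial and $D_{m-j-1}$ parts live on disjoint index rectangles. After this, the remainder is just bookkeeping of signs and $b$-exponents, and the method notably tolerates the numerous vanishing Hankel determinants where classical J- or S-fraction methods cannot be applied.
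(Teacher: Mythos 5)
Your proposal is correct and takes essentially the same route as the paper's proof: clear denominators via the product rule with $P(x)P(y)$, split the kernel into an anti-triangular $(j+1)\times(j+1)$ block of determinant $(-1)^{\binom{j+1}{2}}$ plus $b\,x^{j+1}y^{j+1}D_{m-j-1}(x,y)$, derive the one-step recursion $H_n(x^jf_m)=(-1)^{\binom{j+1}{2}}b^{\,n-j-1}H_{n-j-1}(x^{m-j-2}f_m)$, and iterate to the period-$m$ recursion with the same initial values, handling $j=m-1$ exactly as in \eqref{e-fm-j=m-1}. The only (cosmetic) differences are that you exploit the involution $j\mapsto m-j-2$ rather than redoing the second determinant computation, and you make explicit the cofactor expansion and the vanishing bookkeeping that the paper leaves as "easily verified."
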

\begin{proof}By definition $H_{n}^{-j}(f_m)=H_{n}(x^{j}f_m)$ holds true for all $j\ge 0$.

\begin{align*}
H_{n}&  (x^{j}f_m(x))=\left[\frac{x^{j+1}f_m(x)-y^{j+1}f_m(y)}{x-y}\right]_n   \qquad (\text{apply }  \eqref{e-cf-fm} )  \\
&=\left[\frac{\frac {x^{j+1}} {1-ax-b x^m f_m(x)}-\frac {y^{j+1}} {1-ay-b y^m f_m(y)}}{(x-y)}\right]_n    \qquad (\times (1-ax-b x^m f_m(x))(1-ay-b y^m f_m(y)))   \\
&=\left[\frac{x^{j+1} (1-ay -by^m f_m(y))-y^{j+1} (1- ax -b x^m f_m(x)) }{(x-y)}\right]_n \\
&=\left[\frac{(x^{j+1}-y^{j+1})-axy (x^{j}-y^{j})}{x-y}+bx^{j+1}y^{j+1}\frac{ x^{m-j-1} f_m(x)- y^{m-j-1} f_m(y) }{(x-y)}\right]_n.
\end{align*}
Now the resulting matrix is blocked diagonal of the form $\diag(A,B)=\begin{pmatrix}
                                                                       A & 0 \\
                                                                       0 & B \\
                                                                     \end{pmatrix}
$, where
$$A= \begin{pmatrix}
                                                                   0 & \cdots & 0 & 1 \\
                                                                   0 & \ddots & 1 & -a \\
                                                                   \vdots & \ddots & \cdots & 0 \\
                                                                   1 & -a & \cdots  & 0 \\
                                                                 \end{pmatrix}_{\!\!\!\!(j+1)\times (j+1)}, \qquad \qquad B= b\cdot \MATHCAL{H}_{n-j-1}^1(x^{m-j-1}f_m(x)).$$
Note that we need the condition $0\le j \le m-1$ here. Also note that when $j<m-1$ we have $H_{n-j-1}^1\Big(x^{m-j-1}f_m(x)\Big)= H_{n-j-1}\Big(x^{m-j-2}f_m(x)\Big).$

                                                                  Thus the case for $j=m-1$ follows. For $0\le j\le m-2$,
                                                                   we obtain
\begin{align}
  H_n(x^{j}f_m)= (-1)^{\binom{j+1}{2}}b^{n-j-1}H_{n-j-1}^0(x^{m-j-2}f_m).\label{e-A0A1}
\end{align}

Similar calculation gives
\begin{align*}
H_{n}&(x^{m-j-2}f_m(x))=\left[\frac{x^{m-j-1}f_m(x)-y^{m-j-1}f_m(y)}{(x-y)}\right]_n=\left[\frac{\frac{x^{m-j-1}}{(1- ax -bx^m f_m(x))}-\frac{y^{m-j-1}}{(1-ay -by^m f_m(y))}}{(x-y)}\right]_n\\
&=\left[\frac{x^{m-j-1} (1-ay -by^m f_m(y))-y^{m-j-1} (1- ax -bx^m f_m(x))}{(x-y)}\right]_n\\
&=\left[\frac{x^{m-j-1}-y^{m-j-1}-ax y(x^{m-j-2}-y^{m-j-2})}{x-y}+bx^{m-j-1}y^{m-j-1}\frac{ x^{j+1}f_m(x)-y^{j+1}f_m(y)}{(x-y)}\right]_n \\
&=(-1)^{\binom{m-j-1}{2}}b^{n-m+j+1}H_{n-m+j+1}(x^{j}f_m(x)).
\end{align*}
Then together with \eqref{e-A0A1} we have
\begin{gather}
  H_{n}(x^{j}f_m(x))=(-1)^{\binom{j+1}{2}}(-1)^{\binom{m-j-1}{2}}b^{2n-m-j-1}H_{n-m}(x^{j}f_m(x)).\label{xjfmf}
\end{gather}
This gives a recursion and to complete the proof we only need the easily verified initial conditions:
i) $H_0(x^{j}f_m(x))=1$; ii) $H_{j+1}(x^{j}f_m(x))=(-1)^{\binom{j+1}{2}}$;  iii) $H_n(x^{j}f_m(x))=0$ for $0\leq n\leq j$ by \eqref{xjfmf};
 iv) $H_n(x^{j}f_m(x))=0$ for $j+2\leq n\leq m-1$.
\end{proof}

More generally, we can use the same method to evaluate the Hankel determinants of $q_m(x,t)$, where
\begin{align}
  q_m(x,t)=\frac{1}{1-\alpha(t) x-\beta x^mf_m(x)}, \label{e-qm}
\end{align}
in which $\alpha(t) $ is a  polynomial in $t$ and $\beta $ is a constant.
Note that when $\alpha(t) =a, \beta =b$, $q_m(x,t)$ specializes to $f_m(x,a,b)$; when $\alpha(t) =a+t, \beta =b$, $q_m(x,t)$ specializes to $F_m(x,a,b)$; when $\alpha(t) =a, \beta =2b$, $q_m(x,t)$ specializes to $G_m(x,a,b)$.
\begin{lem}\label{q(x,t)}
For $m\geq 2$, $1\leq i\leq m-1$, we have
$$H_{n}(x^{i-1}q_m(x,t))=(-1)^{\binom{i}{2}}(-1)^{\binom{m-i}{2}}\beta ^{n-i}b^{n-m}H_{n-m}(x^{i-1}f_m(x)).$$
\end{lem}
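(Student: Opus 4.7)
\textbf{Proof proposal for Lemma \ref{q(x,t)}.}

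The plan is to mimic the two-step reduction used in the proof of Lemma \ref{f_m(x))}, exploiting the close structural similarity between $q_m(x,t)$ and $f_m(x,a,b)$: both are defined by a fractional equation of the form $1/(1-(\cdot)x-(\cdot)x^m f_m(x))$. The only difference is that the level-step weight is $\alpha(t)$ (resp.\ $\beta$) instead of $a$ (resp.\ $b$), and crucially the nonlinear term still involves $f_m(x)$, not $q_m(x,t)$ itself. This means that one application of the reduction produces a Hankel determinant of $f_m$ directly, and a second application (borrowed verbatim from the proof of Lemma \ref{f_m(x))}) closes the loop.

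First I would write
\[
H_n(x^{i-1}q_m(x,t))=\left[\frac{x^{i}q_m(x,t)-y^{i}q_m(y,t)}{x-y}\right]_n,
\]
then use the product rule to multiply the bracketed kernel by $(1-\alpha(t)x-\beta x^m f_m(x))(1-\alpha(t)y-\beta y^m f_m(y))$ (both factors are formal power series equal to $1$ at the origin). The numerator simplifies to
\[
\frac{x^{i}-y^{i}-\alpha(t)xy(x^{i-1}-y^{i-1})}{x-y}+\beta x^{i}y^{i}\cdot\frac{x^{m-i}f_m(x)-y^{m-i}f_m(y)}{x-y},
\]
which produces a block-diagonal matrix $\mathrm{diag}(A,B)$ with $A$ an $i\times i$ anti-diagonal block (entries $1$ on the anti-diagonal, $-\alpha(t)$ on the sub-anti-diagonal) and $B=\beta\cdot\mathcal{H}^{1}_{n-i}(x^{m-i}f_m(x))$. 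The assumption $1\le i\le m-1$ ensures $m-i\ge 1$, so $\mathcal{H}^{1}_{n-i}(x^{m-i}f_m)=\mathcal{H}_{n-i}(x^{m-i-1}f_m)$. Since $\det A=(-1)^{\binom{i}{2}}$, this gives
\[
H_n(x^{i-1}q_m(x,t))=(-1)^{\binom{i}{2}}\beta^{n-i}H_{n-i}(x^{m-i-1}f_m(x)).
\]

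For the second step I would invoke the reduction already established inside the proof of Lemma \ref{f_m(x))}, namely
\[
H_{n}(x^{m-j-2}f_m(x))=(-1)^{\binom{m-j-1}{2}}b^{n-m+j+1}H_{n-m+j+1}(x^{j}f_m(x)),
\]
specialized to $j=i-1$, which yields
\[
H_{n-i}(x^{m-i-1}f_m(x))=(-1)^{\binom{m-i}{2}}b^{n-m}H_{n-m}(x^{i-1}f_m(x)).
\]
Combining the two displayed reductions gives exactly the claimed formula. The main (mild) obstacle is bookkeeping: one must verify that the index ranges permit both reductions (the first requires $i\le m-1$, the second requires $m-i-1\le m-2$, i.e.\ $i\ge 1$) and that the powers of $\beta$ and $b$ combine correctly, but no genuinely new computation beyond what appears in Lemma \ref{f_m(x))} is required.
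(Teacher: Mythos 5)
Your proposal is correct and follows essentially the same route as the paper: the identical first reduction via the product rule and the block decomposition $\diag(A,B)$ giving $H_n(x^{i-1}q_m(x,t))=(-1)^{\binom{i}{2}}\beta^{n-i}H_{n-i}(x^{m-i-1}f_m(x))$, followed by the recycling of a reduction from the proof of Lemma \ref{f_m(x))}. The only cosmetic difference is that you quote the second displayed reduction there with $j=i-1$, while the paper cites \eqref{e-A0A1} with $j=m-i-1$; these are the same identity under the substitution $j\mapsto m-j-2$, so the two proofs coincide.
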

\begin{proof} For fixed $i$ with $1\leq i\leq m-1$, by \eqref{e-qm}, we have
\begin{align*}
H_{n}(x^{i-1}q_m(x,t))&=\left[\frac{x^{i}q_m(x,t)-y^{i}q_m(y,t)}{x-y}\right]_n=\left[\frac{\frac {x^{i}} {(1- \alpha(t) x -\beta  x^m f_m(x))}-\frac {y^{i}} {(1-\alpha(t) y -\beta y^m f_m(y))}}{(x-y)}\right]_n\\
&=\left[\frac{x^{i} (1-\alpha(t) y -\beta y^m f_m(y))-y^{i} (1- \alpha(t) x -\beta  x^m f_m(x)) }{(x-y)}\right]_n\\
&=\left[\frac{(x^{i}-y^{i})-\alpha(t) xy({x^{i-1}-y^{i-1}})}{x-y}+\beta x^{i}y^{i}\frac{ x^{m-i} f_m(x)- y^{m-i} f_m(y) }{(x-y)}\right]_n \\
&=(-1)^{\binom{i}{2}}\beta ^{n-i}H_{n-i}(x^{m-i-1}f_m(x)).
\end{align*}
Note that we need the condition $1\leq i\leq m-1$.

By \eqref{e-A0A1} we have
\begin{align*}
  H_{n}(x^{i-1}q_m(x,t))&=(-1)^{\binom{i}{2}}\beta ^{n-i}H_{n-i}(x^{m-i-1}f_m(x)) \\
  & =(-1)^{\binom{i}{2}}(-1)^{\binom{m-i}{2}}\beta ^{n-i}b^{n-m}H_{n-m}(x^{i-1}f_m(x)).
\end{align*}
This completes the proof.
\end{proof}

In order to calculate $H_n^1(f_m)$, $H_n^1(F_m)$ and $H_n^1(G_m)$, we need 
 the following results.
\begin{lem}\label{q(x,t)1}
\begin{align}
H_{n}^1(q_1(x,t))&=\alpha(t) \beta ^{n-1}H_{n-1}^{(2)}(f_1(x))+\beta ^{n}H_{n}(f_1(x)).\label{f-e-q1}\\
H_{n}^1(q_2(x,t))&=\alpha(t) \beta ^{n-1}H_{n-1}^1(f_2(x))-\beta ^{n}b^{n-2}H_{n-2}^1(f_2(x)).\label{f-e-q2}
\end{align}
For $m\geq 3$, we have $$H_{n}^1(q_m(x,t))=\alpha(t) \beta ^{n-1}H_{n-1}^0(x^{m-3}f_m(x))+(-1)^{\binom{m}{2}}\beta ^{n}b^{n-m}H_{n-m}^1(f_m(x)).$$
\end{lem}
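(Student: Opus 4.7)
The plan is to mimic the derivations used in Lemmas \ref{f_m(x))} and \ref{q(x,t)}. Starting from
$$H_n^1(q_m(x,t)) = \left[\frac{q_m(x,t)-q_m(y,t)}{x-y}\right]_n,$$
I would substitute the fractional form \eqref{e-qm} and invoke the Product Rule, multiplying the two-variable kernel by $(1-\alpha(t)x-\beta x^m f_m(x))(1-\alpha(t)y-\beta y^m f_m(y))$, which has constant term $1$. The numerator collapses to $\alpha(t)(x-y)+\beta(x^m f_m(x)-y^m f_m(y))$, so after dividing by $x-y$ I obtain
$$\frac{q_m(x,t)-q_m(y,t)}{x-y}\ \longleftrightarrow\ \alpha(t)+\beta\,\frac{x^m f_m(x)-y^m f_m(y)}{x-y}.$$

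Next I would apply the simple identity \eqref{e-cDxy} together with the Constant Rule to split the resulting $n\times n$ determinant in two pieces:
$$H_n^1(q_m(x,t))=\alpha(t)\,\beta^{n-1}\det(d_{i,j})_{1\le i,j\le n-1}+\beta^{n}H_n(x^{m-1}f_m(x)),$$
where $d_{i,j}$ is the coefficient of $x^i y^j$ in $\frac{x^m f_m(x)-y^m f_m(y)}{x-y}$. A routine bookkeeping identifies $d_{i,j}$ with the coefficient of $x^{i+j+1-m}$ in $f_m(x)$; restricting to $1\le i,j\le n-1$ and reindexing $i\mapsto i-1$, $j\mapsto j-1$ shows that the $(n-1)\times(n-1)$ minor equals the shifted Hankel $H_{n-1}^{(3-m)}(f_m(x))$. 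For $m=1$ this is $H_{n-1}^{(2)}(f_1)$; for $m=2$ it is $H_{n-1}^1(f_2)$; for $m\ge 3$ it equals $H_{n-1}(x^{m-3}f_m)$, matching the three statements of the lemma.

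Finally I would convert the second term $\beta^n H_n(x^{m-1}f_m)$ using Lemma \ref{f_m(x))}: for $m=1$ the factor $x^{m-1}=1$ gives the bare $\beta^n H_n(f_1)$; for $m=2$ formula \eqref{e-fm-j=m-1} yields $H_n(xf_2)=-b^{n-2}H_{n-2}^1(f_2)$, producing the minus sign in \eqref{f-e-q2}; for $m\ge 3$ formula \eqref{e-fm-j=m-1} gives $H_n(x^{m-1}f_m)=(-1)^{\binom{m}{2}}b^{n-m}H_{n-m}^1(f_m)$, yielding the sign and the factor $b^{n-m}$ in the general formula. The only genuine care point — and the step most prone to indexing mistakes — is the reindexing of the first minor: one must correctly match the stripped $(n-1)\times(n-1)$ submatrix against the various incarnations of $H_{n-1}^{(3-m)}(f_m)$, which differ in appearance (positively versus negatively shifted) depending on whether $m\le 2$ or $m\ge 3$. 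Once that identification is in hand, the three-case statement follows immediately from Lemma \ref{f_m(x))}.
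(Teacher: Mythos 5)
Your proposal is correct and follows essentially the same route as the paper: substitute the fractional form \eqref{e-qm}, clear denominators by the product rule to reduce the kernel to $\alpha(t)+\beta\,\frac{x^m f_m(x)-y^m f_m(y)}{x-y}$, split off the constant via \eqref{e-cDxy}, and convert the $\beta^n H_n(x^{m-1}f_m)$ term through \eqref{e-fm-j=m-1}. Your explicit identification of the stripped $(n-1)\times(n-1)$ minor with $H_{n-1}^{(3-m)}(f_m)$ --- which the paper asserts case by case without the reindexing details --- is accurate, including the convention $H_{n-1}^{(3-m)}(f_m)=H_{n-1}(x^{m-3}f_m)$ for $m\ge 3$.
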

\begin{proof} By \eqref{e-qm}, we have
\begin{align*}
H_{n}^1(q_m(x))&=\left[\frac{q_m(x,t)-q_m(y,t)}{x-y}\right]_n=\left[\frac{\frac {1} {(1- \alpha(t) x -\beta  x^m f_m(x))}-\frac {1} {(1-\alpha(t) y -\beta y^m f_m(y))}}{(x-y)}\right]_n\\
&=\left[\frac{(1-\alpha(t) y -\beta y^m f_m(y))-(1- \alpha(t) x -\beta  x^m f_m(x)) }{(x-y)}\right]_n\\
&=\left[\alpha(t) +\beta \frac{ x^{m} f_m(x)- y^{m} f_m(y) }{(x-y)}\right]_n.
\end{align*}
Note that
\item[i)] By \eqref{e-cDxy}, when $m=1$, we have
$$H_{n}^1(q_1(x,t))=\alpha(t) \beta ^{n-1}H_{n-1}^{(2)}(f_1(x))+\beta ^{n}H_{n}(f_1(x)).$$
\item[ii)] By \eqref{e-cDxy}, when $m=2$, we have \begin{align*}
H_{n}^1(q_2(x,t))&=\alpha(t) \beta ^{n-1}H_{n-1}^1(f_2(x))+\beta ^{n}H_{n}(xf_2(x))\\
&=\alpha(t) \beta ^{n-1}H_{n-1}^1(f_2(x))-\beta ^{n}b^{n-2}H_{n-2}^1(f_2(x)).\quad (\text{by } \eqref{e-fm-j=m-1})
\end{align*}
\item[iii)] By \eqref{e-cDxy}, when $m \geq 3$, we have
\begin{align*}
H_{n}^1(q_m(x))&=\alpha(t) \beta ^{n-1}H_{n-1}(x^{m-3}f_m(x))+\beta ^{n}H_{n}(x^{m-1}f_m(x))\\
 &=\alpha(t) \beta ^{n-1}H_{n-1}(x^{m-3}f_m(x))+(-1)^{\binom{m}{2}}\beta ^{n}b^{n-m}H_{n-m}^1(f_m(x)).\quad (\text{by  } \eqref{e-fm-j=m-1})
\end{align*}
Then we complete the proof.
\end{proof}
\begin{lem}\label{q_1(x,t)}
We have
\begin{align*}
H_{n}(q_1(x,t))=a\beta ^{n-1}b^{n-2}H_{n-2}^{(2)}(f_1(x))+(\beta b)^{n-1}H_{n-1}(f_1(x)).
\end{align*}
\end{lem}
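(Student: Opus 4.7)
The plan is to mimic the method of Lemma \ref{q(x,t)1} in the case $m=1$, but starting from the \emph{unshifted} Hankel kernel $\frac{xq_1(x,t)-yq_1(y,t)}{x-y}$ rather than the shifted one. The strategy has two stages: first reduce $H_n(q_1(x,t))$ to $\beta^{n-1}H^1_{n-1}(f_1)$ via the Product Rule, and then invoke the already-proved identity \eqref{f-e-q1} (applied to the specialization $q_1 = f_1$) to rewrite $H^1_{n-1}(f_1)$ in the desired two-term form.

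For the first stage, I multiply the interior of $\bigl[\frac{xq_1(x,t)-yq_1(y,t)}{x-y}\bigr]_n$ by the two denominators $1-\alpha(t)x-\beta x f_1(x)$ and $1-\alpha(t)y-\beta y f_1(y)$, both of which have constant term $1$, so the Product Rule preserves $[\,\cdot\,]_n$. Expanding the numerator $x\bigl(1-\alpha(t)y-\beta y f_1(y)\bigr)-y\bigl(1-\alpha(t)x-\beta x f_1(x)\bigr)$, the $\alpha(t)\,xy$ terms cancel pairwise and what remains divides cleanly by $x-y$, leaving
\[
  H_n(q_1(x,t)) \;=\; \Bigl[1 + \beta xy\,\tfrac{f_1(x)-f_1(y)}{x-y}\Bigr]_n.
\]
The factor $xy$ annihilates the first row and first column of the underlying $n\times n$ matrix except for the lone $1$ at position $(0,0)$, while for $i,j \ge 1$ the $(i,j)$-entry equals $\beta$ times the $(i-1,j-1)$-entry of $\mathcal{H}^{1}_{n-1}(f_1)$. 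Expanding along the first row gives $H_n(q_1(x,t)) = \beta^{n-1}\,H^1_{n-1}(f_1)$.

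For the second stage, I apply \eqref{f-e-q1} to the specialization $q_1 = f_1$, which corresponds to $\alpha(t) = a$ and $\beta = b$, and shift $n \mapsto n-1$ to obtain
\[
  H^1_{n-1}(f_1) \;=\; a\,b^{n-2}\,H^{(2)}_{n-2}(f_1) + b^{n-1}\,H_{n-1}(f_1).
\]
Substituting this into the previous display produces exactly the claimed identity. I do not anticipate any real obstacle; the one thing to verify carefully is that the $\alpha(t)$ contributions indeed cancel after clearing the denominators, which is precisely why the final answer depends on the parameter $a$ of $f_1$ rather than on $\alpha(t)$.
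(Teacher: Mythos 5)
Your proposal is correct and matches the paper's own proof essentially step for step: the same clearing of denominators via the Product Rule, the same cancellation of the $\alpha(t)\,xy$ terms leaving $\bigl[1+\beta xy\,\frac{f_1(x)-f_1(y)}{x-y}\bigr]_n=\beta^{n-1}H^1_{n-1}(f_1)$, and the same application of \eqref{f-e-q1} specialized to $f_1$ (with $\alpha(t)=a$, $\beta=b$) at index $n-1$. Your block-diagonal justification of the determinant reduction and your remark on why the result depends on $a$ rather than $\alpha(t)$ are both sound.
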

\begin{proof}By \eqref{e-qm}, we have
\begin{align*}
H_{n}(q_1(x,t))&=\left[\frac{xq_1(x,t)-yq_1(y,t)}{x-y}\right]_n=\left[\frac{\frac {x}{1- \alpha(t) x -\beta  x f_1(x)}-\frac {y}{1-\alpha(t) y -\beta yf_1(y)}}{x-y}\right]_n\\
&=\left[\frac{x(1-\alpha(t) y -\beta yf_1(y))-y(1- \alpha(t) x -\beta  x f_1(x))}{(x-y)}\right]_n\\
&=\left[1+\beta xy\frac{f_1(x)- f_1(y) }{(x-y)}\right]_n \\
&=\beta ^{n-1}H_{n-1}^1(f_1(x)).
\end{align*}
Now  by \eqref{f-e-q1} with respect to $f_1$, we obtain
\begin{gather*}
 H_{n}^1(f_1(x))=ab^{n-1}H_{n-1}^{(2)}(f_1(x))+b^nH_{n}(f_1(x)).
\end{gather*}

Therefore we have
\begin{align*}
H_{n}(q_1(x,t))&=\beta ^{n-1}H_{n-1}^1(f_1(x))=a\beta ^{n-1}b^{n-2}H_{n-2}^{(2)}(f_1(x))+(\beta b)^{n-1}H_{n-1}(f_1(x)).
\end{align*}
\end{proof}

\section{Sulanke and Xin's quadratic transformation\label{sec:sulanke-xin-trans}}

In this section, we introduce the continued fraction method of Sulanke and Xin \cite{R.Sulanke and Xin}.
This is the main tool of this paper.

\subsection{The transformation $\tau$}
Suppose the generating function $F(x)$ is the unique solution of a quadratic functional equation which can be written as
\begin{gather}
  F(x)=\frac{x^d}{u(x)+x^kv(x)F(x)},\label{xinF(x)}
\end{gather}
where $u(x)$ and $v(x)$ are rational power series with nonzero constants, $d$ is a nonnegative integer, and $k$ is a positive integer.
We need the unique decomposition of $u(x)$ with respect to $d$: $u(x)=u_L(x)+x^{d+2}u_H(x)$ where $u_L(x)$ is a polynomial of degree at most $d+1$ and $u_H(x)$ is a power series.
Then Propositions 4.1 and 4.2 of \cite{R.Sulanke and Xin} can be summarized as follows.

\begin{prop}\label{xinu(0,i,ii)}
Let $F(x)$ be determined by  \eqref{xinF(x)}. Then the quadratic transformation $\tau(F)$ of $F$ defined as follows gives close connections
between   $H(F)$ and $H(\tau(F))$.
\begin{enumerate}
\item[i)] If $u(0)\neq1$, then $\tau(F)=G=u(0)F$ is determined by $G(x)=\frac{x^d}{u(0)^{-1}u(x)+x^ku(0)^{-2}v(x)G(x)}$, and $H_n(\tau(F))=u(0)^{n}H_n(F(x))$;

\item[ii)] If $u(0)=1$ and $k=1$, then $\tau(F)=x^{-1}(G(x)-G(0))$, where $G(x)$ is determined by
$$G(x)=\frac{-v(x)-xu_L(x)u_H(x)}{u_L(x)-x^{d+2}u_H(x)-x^{d+1}G(x)},$$
and we have
$$H_{n-d-1}(\tau(F))=(-1)^{\binom{d+1}{2}}H_n(F(x));$$

\item[iii)] If $u(0)=1$ and $k\geq2$, then $\tau(F)=G$, where $G(x)$ is determined by
$$G(x)=\frac{-x^{k-2}v(x)-u_L(x)u_H(x)}{u_L(x)-x^{d+2}u_H(x)-x^{d+2}G(x)},$$
and we have
$$H_{n-d-1}(\tau(F))=(-1)^{\binom{d+1}{2}}H_n(F(x)).$$\label{xinu(0)(ii)}

\end{enumerate}

\end{prop}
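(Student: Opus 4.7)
My plan is to prove the three cases uniformly from the two-variable representation $H_n(F) = [(xF(x)-yF(y))/(x-y)]_n$, applying the three rules (constant, product, composition) introduced in Section \ref{sec:Gessel-Xin-method}. Case (i) is a one-line application of the constant rule: since $\tau(F) = u(0) F$, we have
\[
H_n(\tau(F)) = [u(0)(xF(x)-yF(y))/(x-y)]_n = u(0)^n H_n(F),
\]
while substituting $F = G/u(0)$ into \eqref{xinF(x)} directly yields the claimed functional equation for $G$.

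For cases (ii) and (iii), my strategy is to clear the denominator in $xF(x) = x^{d+1}/(u(x)+x^k v(x)F(x))$ and then extract a block-diagonal structure. Concretely, I would multiply the kernel $(xF(x)-yF(y))/(x-y)$ by the product $[u(x)+x^k v(x) F(x)]\cdot[u(y)+y^k v(y) F(y)]$, which is a power series with constant term $u(0)^2 = 1$, so by the product rule the Hankel determinant is unchanged. The kernel becomes
\[
\frac{x^{d+1}[u(y)+y^k v(y) F(y)] - y^{d+1}[u(x)+x^k v(x) F(x)]}{x-y}.
\]
Next I would insert the decomposition $u(x) = u_L(x) + x^{d+2} u_H(x)$ and isolate the purely polynomial piece $(x^{d+1}u_L(y)-y^{d+1}u_L(x))/(x-y)$. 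Because $\deg u_L \le d+1$ and $u_L(0) = u(0) = 1$, this piece lives in the upper-left $(d+1)\times(d+1)$ block, is anti-triangular with $1$'s on the anti-diagonal, and therefore contributes the sign $\det(A) = (-1)^{\binom{d+1}{2}}$.

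The main technical step is to show that the remainder collects into $x^{d+1}y^{d+1}\tilde G(x,y)$, where $\tilde G$ is the Hankel kernel of a new function $G(x)$. This is where the split $k = 1$ versus $k \ge 2$ enters. For $k \ge 2$, the cross terms $x^{d+1}y^k v(y) F(y)$ together with the high-degree piece $y^{d+1} x^{d+2} u_H(x)$ recombine naturally to produce the factor $x^{d+1}y^{d+1}$, and solving the resulting two-variable identity for the unique formal power series $G$ yields the quadratic in case (iii). For $k = 1$, the cross term $x^{d+1} y v(y) F(y)$ carries only one factor of $y$, so the natural candidate $G$ acquires a nonzero value at $0$; the shift $\tau(F) = x^{-1}(G(x) - G(0))$ absorbs this constant and restores a genuine Hankel kernel, yielding case (ii). In either subcase the block decomposition $\diag(A, B)$ immediately gives $H_n(F) = \det(A) \cdot H_{n-d-1}(\tau(F)) = (-1)^{\binom{d+1}{2}} H_{n-d-1}(\tau(F))$.

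The hardest part will be the bookkeeping required to pin down the functional equation for $G$ exactly: one must track how $v(x)$, $u_H(x)$, the product $u_L(x) u_H(x)$, and the factor $x^{k-2} v(x)$ recombine after clearing denominators and dividing out $x^{d+1}y^{d+1}$, and verify that the resulting $G$ is the unique formal power series solution of the claimed quadratic. Once this algebraic identity is set up, however, uniqueness of quadratic power-series solutions together with matching of the first few coefficients completes the proof.
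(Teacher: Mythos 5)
Your proposal is correct and is essentially the intended proof: the paper itself does not reprove this Proposition but quotes Propositions 4.1--4.2 of Sulanke--Xin, whose argument is exactly your route --- clear the denominator via the product rule (legitimate since the multiplier has constant term $u(0)^2=1$), split off the anti-triangular $(d+1)\times(d+1)$ block coming from $u_L$ with determinant $(-1)^{\binom{d+1}{2}}$, and observe that the remaining kernel is $x^{d+1}y^{d+1}$ times the shifted Hankel kernel of $\Psi(x)=-xu_H(x)-x^{k-1}v(x)F(x)x^{-d}$, with the $k=1$ versus $k\ge 2$ dichotomy governing whether $\Psi/x$ is a power series or a constant must be shifted away. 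The bookkeeping you defer does close: substituting $\hat F(x)=F(x)x^{-d}$ from the original equation into $G=-u_H-x^{k-2}v\hat F$ (resp.\ $G=\Psi$ when $k=1$) and using $u-x^{d+2}u_H=u_L$ yields precisely the stated quadratic equations for $G$ in cases (ii) and (iii).
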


\begin{rema}
  The proposition is easily implemented by Maple, so most of our examples only exhibit the important steps. For an example with full details, see the proof of
  Theorem \ref{t-for-Catalan}, where we have parameter exponents and then have to compute by hand.
\end{rema}

\subsection{On shifted Hankel determinants of Catalan numbers\label{sec:catalan}}
The well-known Catalan generating function $C(x)=\sum_{n\ge 0} C_n x^n$, where $C_n=\frac{1}{n+1}\binom{2n}{n}$, satisfies the quadratic functional equation
$C(x)=1+xC(x)^2$. Its equivalent form
\begin{align*}
  C(x)=\frac{1}{1-xC(x)}
\end{align*}
can be used to produce the so-called  S-continued fractions:
$$C(x)=\frac{1}{1-\displaystyle\frac{x}{1-\displaystyle\frac{x}{1-\cdots}}}.$$

Using this representation, one can show that $H_n(C(x))=1,\ \ H_n^1(C(x))=1$. Then using induction and the condensation formula \eqref{e-condensation}, one can show (see, e.g.,\cite{J.M.E.Mays J.Wojciechowski})  that
$$H_n^{(2)}(C(x))=n+1,\ \ H_n^{(i)}(C(x))=\prod_{j=1}^{i-1}\prod_{k=1}^{j}\frac{2n+j+k}{j+k}.$$

Our attempt in this subsection is to use shifted periodic continued fractions to prove the above results.
However, the computation becomes complicated for $H_n^{(i)}(C(x))$ when $i\ge 3$.
\begin{proof}[Sketched computation of $H_n^{(i)}(C(x))$ for $i\le 2$]

\noindent
Case $i=0,1$:
Applying Proposition \ref{xinu(0,i,ii)} gives $H_n(C(x)) =H_{n-1}^1(C(x))= H_{n-1}(C_1(x)),$ where
$$C_1(x)=\frac{C(x)-1}{x}=\frac{1}{ 1-2\,x-{x}^{2}C_1(x)}.$$

Applying Proposition \ref{xinu(0,i,ii)} again  gives $H_{n-1}(C_1(x))=H_{n-2}(C_1(x))$. Thus we obtain
$H_n(C(x))=H_{n-1}^1(C(x))=H_{n-1}(C_1(x))=\cdots =H_0(C_1(x))=1$.

\noindent
Case $i=2$: First we write $H_n^{(2)}(C)=H_n(A_0)$ where
$$A_0(x)=\frac{C(x)-x-1}{x^2}={\frac {x+2}{-{x}^{3}A_0(x)-2\,{x}^{2}-2\,x+1}}.$$

Applying Proposition \ref{xinu(0,i,ii)} gives  $H_n(A_0)=2^n H_{n-1}(A_1)$, where $$A_1(x)=-\frac {x+6}{4({x}^{2}A_1(x) \left( x+2 \right) +{x}^{2}+4\,x-2)}.$$
Repeat application of Proposition \ref{xinu(0,i,ii)} suggests that we shall define
$$A_p(x)=-\frac{p^2\left(x+(p+1)(p+2)\right)}{(p+1)^2x^2(x+(p+1)p)A_p(x)+2p(p+1)x^2+2p(p+1)^3x-p(p+1)^3}, \qquad p\geq 1$$
and obtain
\begin{align*}
  H_n(A_{p})&=\left( {\frac {(p+1)^2-1}{ (p+1)^2}}\right)^nH_{n-1}(A_{p+1}), \qquad p\geq1.
\end{align*}
It then follows that
 $$ H_n^{(2)}(C(x))=2^n\cdot\left(\frac{2^2-1}{2^2}\right)^{n-1}\!\!\!\!\cdot\left(\frac{3^2-1}{3^2}\right)^{n-2}\!\!\!\!\cdot \cdots \cdot\left(\frac{n^2-1}{n^2}\right)=n+1.$$
%
This completes the proof.
\end{proof}

\begin{rema}
  The continued fractions for $C_1(x)$ can be written as $C_1(x)\mathop{\longrightarrow}\limits^\tau C_1(x)$. This is the simplest periodic continued fractions. One of the simplest shifted periodic continued fractions is
   $A_p(x)\mathop{\longrightarrow}\limits^\tau A_{p+1}(x)$.  Note that
  $C(x) \to C(x)$ is not the periodic continued fractions in our sense.
\end{rema}

\section{Short derivation of Cigler's ex-conjectures: part II \label{sec:short proof 2}}
In this section we complete the computation of $H_n^{(i)}(f_m(x))$, $H_n^{(i)}(F_m(x))$, $H_n^{(i)}(G_m(x))$ for $i=0,1, 2$.
Note that \eqref{conG1}, \eqref{conG2} and most part of Theorem \ref{f-2} were conjectured in \cite{J.Cigler}, and proved in \cite{X. K. Chang. X. B. Hu and Y. N. Zhang}.
Similar idea can be used to compute $H_n^{(3)}(f_m(x))$, $H_n^{(3)}(F_m(x))$, $H_n^{(3)}(G_m(x))$.

\subsection{For $H_n(q_m(x,t))$}
The $m=1$ case has to be considered separately.
\begin{thm}\cite{X. K. Chang. X. B. Hu and Y. N. Zhang,J.Cigler}\label{Hq1}
\begin{align*}
  H_n(f_1(x))&=b^{\binom{n}{2}}(a+b)^{\binom{n}{2}}, \\
  H_n(F_1(x))&=b^{\binom{n}{2}}(a+b)^{\binom{n}{2}},\\
  H_n(G_1(x))&=2^{n-1}b^{\binom{n}{2}}(a+b)^{\binom{n}{2}}.
\end{align*}
\end{thm}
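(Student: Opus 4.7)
The plan is to reduce Theorem \ref{Hq1} to the single identity $H_n(f_1)=b^{\binom{n}{2}}(a+b)^{\binom{n}{2}}$, and then to prove that identity by iterating Sulanke--Xin's quadratic transformation $\tau$.

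First, I would observe that $f_1$, $F_1$ and $G_1$ are all instances of the generating function $q_1(x,t)$ from Lemma \ref{q_1(x,t)}: $f_1$ corresponds to $(\alpha(t),\beta)=(a,b)$, $F_1$ to $(\alpha(t),\beta)=(a+t,b)$, and $G_1$ to $(\alpha(t),\beta)=(a,2b)$. The right-hand side of Lemma \ref{q_1(x,t)} involves $a$, $b$ (the parameters of the underlying $f_1$) and $\beta$, but \emph{not} $\alpha(t)$. Specializing to $F_1$ with $\beta=b$ therefore produces exactly the same expression as specializing to $f_1$, giving $H_n(F_1)=H_n(f_1)$; specializing to $G_1$ with $\beta=2b$ pulls out a global factor $2^{n-1}$, giving $H_n(G_1)=2^{n-1}H_n(f_1)$. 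Thus only $H_n(f_1)$ remains to be proven.

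To compute $H_n(f_1)$ I would start from $f_1=1/(1-ax-bx f_1)$ and apply Proposition \ref{xinu(0,i,ii)}(ii) with $d=0$, $k=1$, $u(x)=1-ax$, $v(x)=-b$, $u(0)=1$. A direct substitution gives $\tau(f_1)=\psi:=b\,x^{-1}(f_1-1)$ together with $H_n(f_1)=H_{n-1}(\psi)$. Since $f_1-1=xf_1(a+bf_1)$ one has $\psi=bf_1(a+bf_1)$; combining this with $f_1=1+x\psi/b$ to eliminate $f_1$ yields
\[
\psi=b(a+b)+(a+2b)\,x\,\psi+x^{2}\psi^{2},\qquad\text{equivalently}\qquad \psi=\frac{b(a+b)}{1-(a+2b)x-x^{2}\psi}.
\]
Since $\psi(0)=b(a+b)\ne 1$ in general, I next apply case (i) to normalize, producing $\tilde\psi=\psi/(b(a+b))$, which satisfies $\tilde\psi=1/\bigl(1-(a+2b)x-b(a+b)x^{2}\tilde\psi\bigr)$ together with $H_n(\psi)=(b(a+b))^{n}H_n(\tilde\psi)$. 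Finally, case (iii) applies to $\tilde\psi$ (with $u_L=1-(a+2b)x$, $u_H=0$, $k=2$), and a short computation shows $\tau(\tilde\psi)=\psi$, so the orbit $\psi\to\tilde\psi\to\psi$ closes into a period-$2$ periodic continued fraction, accompanied by $H_{n-1}(\psi)=H_n(\tilde\psi)$.

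Combining these relations yields the one-step recursion $H_n(\tilde\psi)=(b(a+b))^{n-1}H_{n-1}(\tilde\psi)$, which iterates from $H_0(\tilde\psi)=1$ to $H_n(\tilde\psi)=(b(a+b))^{\binom{n}{2}}$; hence $H_n(f_1)=H_{n-1}(\psi)=H_n(\tilde\psi)=b^{\binom{n}{2}}(a+b)^{\binom{n}{2}}$. The main obstacle I anticipate is the careful case-tracking in Proposition \ref{xinu(0,i,ii)}: one must check which case applies at each step (in particular the intermediate normalization via case (i) forced by $\psi(0)\ne 1$) and record the scalar factors correctly. Once the period-$2$ cycle has been verified, both the induction and the downstream reductions to $H_n(F_1)$ and $H_n(G_1)$ are immediate.
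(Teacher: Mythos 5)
Your proposal is correct, and its computational core coincides with the paper's proof, though your treatment of $F_1$ and $G_1$ is genuinely different and arguably cleaner. For $H_n(f_1)$ you and the paper run the same $\tau$-orbit with slightly different bookkeeping: the paper's $A_1=(f_1-1)/x$ satisfies $\psi=bA_1$, its $A_2$ satisfies exactly your functional equation for $\psi$, and its observation $A_3=A_2$ is your period-two cycle $\psi\to\tilde\psi\to\psi$ with the case-(i) normalization folded into each application of $\tau$; both routes yield $H_n(f_1)=(b(a+b))^{\binom{n}{2}}$, and your case-checks of Proposition \ref{xinu(0,i,ii)} (case (ii) with $d=0$, $k=1$; case (i) with $u(0)=(b(a+b))^{-1}$; case (iii) with $u_H=0$) are all applied correctly, with uniqueness of the power-series solution legitimately closing the cycle $\tau(\tilde\psi)=\psi$. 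Where you diverge is the other two determinants: the paper merely asserts that ``the computation for $F_1(x),G_1(x)$ is similar,'' i.e.\ it re-runs the continued-fraction iteration twice more, whereas you deduce $H_n(F_1)=H_n(f_1)$ and $H_n(G_1)=2^{n-1}H_n(f_1)$ from Lemma \ref{q_1(x,t)}, exploiting that its right-hand side is independent of $\alpha(t)$ and that setting $\beta=2b$ scales both terms by exactly $2^{n-1}$. This reduction is fully supported by the paper's own lemma (proved earlier via the product rules), it explains structurally why the answer is $t$-independent and why $G_1$ differs only by a power of $2$, and it replaces two further $\tau$-computations by one line. A small refinement: the intermediate identity $H_n(q_1(x,t))=\beta^{n-1}H_{n-1}^1(f_1)$ inside the proof of Lemma \ref{q_1(x,t)} gives your reduction even more directly and sidesteps the cosmetic issue that the lemma's stated form involves $H_{n-2}^{(2)}(f_1)$ and $b^{n-2}$, which is awkward at $n=1$ (where, of course, all three determinants are trivially $1$).
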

\begin{proof}
Consider the continued fractions
$$f_1(x)=\frac{1}{1-ax-bxf_1(x)}.$$
Repeat application of Proposition \ref{xinu(0,i,ii)} gives
\begin{gather}\label{f1-f1}
 H_n(f_1)=H_{n-1}^1(bf_1)=b^{n-1}H_{n-1}^1(f_1).
\end{gather}
Now let $A_1=\frac{f_1-1}{x}$, and repeatedly apply Proposition \ref{xinu(0,i,ii)} again. We have
 \begin{align*}
H_{n-1}^1(f_1)&=H_{n-1}(A_1), \quad & A_1(x)=\frac{(a+b)}{1-(a+2b)x-bx^2A_1(x)}.\\
  H_{n-1}(A_1)&=(a+b)^{n-1}H_{n-2}(A_2),&A_2(x)={\frac {b\left(a+b\right)}{1-(a+2b)x-{x}^{2}A_2(x)}}.\\
 H_{n-2}(A_2)&=(b(a+b))^{n-2}H_{n-3}(A_3),&A_3(x) = {\frac {b\left(a+b\right)}{1-(a+2b)x-{x}^{2}A_3(x)}}.
\end{align*}

We see that $A_3(x)=A_2(x)$, thus $$H_{n-2}(A_2)=(b(a+b))^{n-2}H_{n-3}(A_2),$$
by which it is easy to deduce that $$H_{n-2}(A_2))=(b(a+b))^{\binom{n-1}{2}}.$$
Hence
\begin{align*}
 H_n(f_1(x))&=b^{\binom{n}{2}}(a+b)^{\binom{n}{2}}.
\end{align*}
The computation for $F_1(x),G_1(x)$ is similar.
\end{proof}

For the case $m\ge 2$, we apply Lemmas \ref{f_m(x))} and \ref{q(x,t)} to obtain the following result.
\begin{thm}\cite{X. K. Chang. X. B. Hu and Y. N. Zhang,J.Cigler}\label{H(f)}
For $m\geq2,$ we have
\begin{align*}
  H_{mn}(f_m(x))&=(-1)^{\binom{m-1}{2} n}b^{n(mn-1)}, \\
  H_{mn+1}(f_m(x))&=(-1)^{\binom{m-1}{2} n}b^{n(mn+1)},
\end{align*}
and $ H_{n}(f_m(x))=0 $ for all other $n$;
\begin{align*}
  H_{mn}(F_m(x))&=(-1)^{\binom{m-1}{2}n}b^{n(mn-1)}, \\
  H_{mn+1}(F_m(x))&=(-1)^{\binom{m-1}{2}n}b^{n(mn+1)},
\end{align*}
and $H_{n}(G_m(x))=0 $ for all other $n$;
\begin{align*}
  H_{mn}(G_m(x))=(-1)^{\binom{m-1}{2} n}2^{mn-1}b^{n(mn-1)},\\
  H_{mn+1}(G_m(x))=(-1)^{\binom{m-1}{2} n}2^{mn}b^{n(mn+1)},
\end{align*}
and $H_{n}(G_m(x))=0 $ for all other $n$.
\end{thm}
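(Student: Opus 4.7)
The plan is to derive all three families of formulas as direct consequences of Lemmas \ref{f_m(x))} and \ref{q(x,t)}, treating $f_m$, $F_m$, and $G_m$ uniformly as specializations of the generic function $q_m(x,t)$ from \eqref{e-qm}. Since the Hankel determinants in the theorem differ from those in the two lemmas only by elementary exponent and sign bookkeeping, no genuinely new computations are needed.

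First, for $H_n(f_m)$, I would simply specialize Lemma \ref{f_m(x))} at $j=0$: the prefactor $(-1)^{\binom{j+1}{2}}$ becomes $1$ since $\binom{1}{2}=0$, the remaining sign is $(-1)^{\binom{m-1}{2}n}$, and the $b$-exponents $mn^2 \mp n$ factor immediately as $n(mn\mp 1)$. The vanishing of $H_n(f_m)$ for $n \not\equiv 0,1 \pmod{m}$ is already part of that lemma.

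Next, I would handle $F_m$ and $G_m$ together by identifying them as $q_m$ with $(\alpha(t),\beta) = (a+t,b)$ and $(a,2b)$ respectively. Lemma \ref{q(x,t)} at $i=1$ yields the reduction
\[
H_n(q_m) \;=\; (-1)^{\binom{m-1}{2}}\,\beta^{n-1}\,b^{n-m}\,H_{n-m}(f_m),
\]
so that $H_n(F_m) = (-1)^{\binom{m-1}{2}} b^{2n-m-1} H_{n-m}(f_m)$ and $H_n(G_m) = 2^{n-1} H_n(F_m)$. Substituting the Step-1 closed forms: for $n=mn'$ the combined $b$-exponent is $(2mn'-m-1) + (n'-1)(m(n'-1)-1) = n'(mn'-1)$; for $n=mn'+1$ it collapses to $n'(mn'+1)$; the signs telescope as $(-1)^{\binom{m-1}{2}}\cdot(-1)^{\binom{m-1}{2}(n'-1)} = (-1)^{\binom{m-1}{2}n'}$; and the factor $2^{n-1}$ in the $G_m$ case accumulates to $2^{mn'-1}$ or $2^{mn'}$ as claimed. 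For $n$ in neither congruence class with $n \geq m$, the vanishing $H_{n-m}(f_m) = 0$ forces $H_n(F_m) = H_n(G_m) = 0$; for $1 \leq n < m$ I would instead use the primitive identity $H_n(q_m) = \beta^{n-1} H_{n-1}(x^{m-2}f_m)$ derived inside the proof of Lemma \ref{q(x,t)}, combined with Lemma \ref{f_m(x))} at $j=m-2$, to conclude $H_{n-1}(x^{m-2}f_m) = 0$ for $2 \leq n \leq m-1$, while $n\in\{0,1\}$ give the trivial value $1$.

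The main obstacle is purely the arithmetic: carefully collapsing the $b$-exponents and the sign powers, and ensuring that the base cases $n < m$ (where the clean form of Lemma \ref{q(x,t)} is not directly applicable and one must fall back on its primitive form) are treated correctly. There is no conceptual gap once Lemmas \ref{f_m(x))} and \ref{q(x,t)} are in hand.
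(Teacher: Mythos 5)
Your proposal is correct and follows exactly the paper's route: the paper proves Theorem \ref{H(f)} simply by invoking Lemma \ref{f_m(x))} (at $j=0$) together with Lemma \ref{q(x,t)} (at $i=1$, with $(\alpha,\beta)=(a+t,b)$ and $(a,2b)$), which is precisely your reduction, and your exponent/sign bookkeeping checks out. Your explicit treatment of the range $1\le n<m$ via the intermediate identity $H_n(q_m)=\beta^{n-1}H_{n-1}(x^{m-2}f_m)$ is a detail the paper leaves implicit, but it is the intended argument, not a different one.
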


\subsection{For $H_n^1(q_m(x,t))$}
We need the notation of generalized Fibonacci numbers: $Fib_n(a,b)=aFib_{n-1}(a,b) + bFib_{n-2}(a,b)$ with initial values $Fib_0(a,b)=0$ and $Fib_1(a,b)=1$.

The case $m\le 2$ has to be considered separately. The formula for $H_n^1(f_1)$ can be obtained by \eqref{f1-f1}. The other formulas
in the following theorem can be obtained by applying Lemma \ref{q(x,t)1} and Theorem \ref{Hq1}.
\begin{thm}\cite{X. K. Chang. X. B. Hu and Y. N. Zhang,J.Cigler}\label{Hf11}
\begin{align*}
  H_n^1(f_1(x))&=b^{\binom{n}{2}}(a+b)^{\binom{n+1}{2}}, \\
  H_n^1(F_1(x))&=b^{\binom{n}{2}}(a+b)^{\binom{n}{2}}((a+b)^n+t\frac{((a+b)^n-b^n)}{a}),\\
  H_n^1(G_1(x))&=2^{n-1}b^{\binom{n}{2}}(a+b)^{\binom{n}{2}}((a+b)^n+b^n).
\end{align*}
\begin{align*}
  H_n^1(f_2(x))&=b^{\binom{n}{2}}Fib_{n+1}(a,-b), \\
  H_n^1(F_2(x))&=b^{\binom{n}{2}}(Fib_{n+1}(a,-b)+tFib_{n}(a,-b)),\\
  H_n^1(G_2(x))&=2^{n-1}b^{\binom{n}{2}}(aFib_{n}(a,-b)-2^{n-1}bFib_{n-1}(a,-b)).
\end{align*}
\end{thm}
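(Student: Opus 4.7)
The plan is to exploit Lemma \ref{q(x,t)1}, whose two specializations \eqref{f-e-q1} and \eqref{f-e-q2} express $H_n^1(q_m)$ for $m=1,2$ in terms of Hankel determinants of $f_m$, combined with the closed-form evaluations of $H_n(f_m)$ already supplied by Theorem \ref{Hq1}. Once the right reductions are in place the whole calculation is essentially mechanical; no additional continued-fraction surgery is needed.

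I would first handle the $m=1$ row. The identity \eqref{f1-f1}, namely $H_n(f_1)=b^{n-1}H_{n-1}^1(f_1)$, combined with Theorem \ref{Hq1}, immediately gives
\[H_{n-1}^1(f_1)=b^{-(n-1)}\cdot b^{\binom{n}{2}}(a+b)^{\binom{n}{2}}=b^{\binom{n-1}{2}}(a+b)^{\binom{n}{2}},\]
which is the claimed formula after a shift of index. To apply \eqref{f-e-q1} to $F_1$ and $G_1$ one still needs $H_{n-1}^{(2)}(f_1)$, which is not given directly. The trick is to re-apply \eqref{f-e-q1} in the special case $q_1=f_1$ (so $\alpha(t)=a$, $\beta=b$) and solve for $H_{n-1}^{(2)}(f_1)$ in terms of the now known quantities $H_n^1(f_1)$ and $H_n(f_1)$; out drops a factor $((a+b)^n-b^n)/a$. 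Plugging the resulting expression back into \eqref{f-e-q1} with $\alpha(t)=a+t,\ \beta=b$ gives $H_n^1(F_1)$, while $\alpha(t)=a,\ \beta=2b$ gives $H_n^1(G_1)$; in both cases the apparent pole $1/a$ disappears after one algebraic simplification and the stated closed forms emerge.

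Next I would turn to $m=2$. Specializing \eqref{f-e-q2} to $q_2=f_2$ (so $\alpha(t)=a$, $\beta=b$) yields the linear recurrence
\[h_n=ab^{n-1}h_{n-1}-b^{2n-2}h_{n-2},\qquad h_n:=H_n^1(f_2),\]
with easily verified initial values $h_0=1$ and $h_1=a$. The arithmetic identities $n-1+\binom{n-1}{2}=\binom{n}{2}$ and $2n-2+\binom{n-2}{2}=\binom{n}{2}+1$ show that dividing out $b^{\binom{n}{2}}$ converts this recurrence into $u_{n+1}=au_n-bu_{n-1}$, which is the defining relation for $Fib_{n+1}(a,-b)$; hence $H_n^1(f_2)=b^{\binom{n}{2}}Fib_{n+1}(a,-b)$ by induction. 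The formulas for $H_n^1(F_2)$ and $H_n^1(G_2)$ then fall out by substituting this closed form into \eqref{f-e-q2} with the appropriate $\alpha(t)$ and $\beta$, followed by one application of the Fibonacci recursion $Fib_{n+1}(a,-b)=aFib_n(a,-b)-bFib_{n-1}(a,-b)$.

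The main pitfalls are purely bookkeeping: verifying that the various exponents of $b$ appearing in the recursion collapse cleanly into the prefactor $b^{\binom{n}{2}}$, and arranging the cancellations that eliminate the $1/a$ introduced when one solves for $H_{n-1}^{(2)}(f_1)$ in the $m=1$ step. The shifted periodic continued fractions advertised in the introduction do not yet appear; they only become necessary from $H_n^{(2)}$ of $f_m$ onward, which is the subject of the following subsection.
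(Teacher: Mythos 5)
Your route coincides with the paper's own proof in every step: the paper likewise obtains $H_n^1(f_1)$ from \eqref{f1-f1}, solves \eqref{f-e-q1} with $\alpha(t)=a,\ \beta=b$ for $H_{n-1}^{(2)}(f_1)$, re-substitutes with $(a+t,b)$ and $(a,2b)$ to get $F_1$ and $G_1$, and disposes of the $m=2$ row by "almost the same" with \eqref{f-e-q2} in place of \eqref{f-e-q1} --- which is exactly your three-term recursion $h_n=ab^{n-1}h_{n-1}-b^{2n-2}h_{n-2}$, whose normalization by $b^{\binom{n}{2}}$ (your exponent identities are correct) yields the Fibonacci form.

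One caveat, which is worth recording: your claim that "the stated closed forms fall out" is not literally true for $H_n^1(G_2)$. Carrying out the substitution you describe, with $\alpha(t)=a$, $\beta=2b$ and $H_{n-j}^1(f_2)=b^{\binom{n-j}{2}}Fib_{n-j+1}(a,-b)$, equation \eqref{f-e-q2} gives
\[
H_n^1(G_2(x))=2^{n-1}b^{\binom{n}{2}}\bigl(a\,Fib_n(a,-b)-2b\,Fib_{n-1}(a,-b)\bigr),
\]
with inner coefficient $2b$, not $2^{n-1}b$ as printed in the theorem. The two expressions agree for $n\le 2$ but diverge from $n=3$ on, and a direct check shows the printed form is a misprint: at $n=3$, $a=b=1$ the relevant coefficients of $G_2$ are $1,3,7,19,51$, so
\[
H_3^1(G_2)=\det\begin{pmatrix}1&3&7\\ 3&7&19\\ 7&19&51\end{pmatrix}=-8,
\]
which matches $2^{2}b^{3}\bigl(aFib_3-2bFib_2\bigr)=4(0-2)=-8$, whereas the printed $2^{n-1}b$ version gives $-16$. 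So your method is sound and in fact produces the correct formula; had you executed the $G_2$ substitution explicitly rather than asserting the stated form emerges, you would have flagged this discrepancy with the statement.
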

\begin{proof}
By \eqref{f1-f1}, we have  $H_{n}^1(f_1)=\frac{H_{n+1}(f_1)}{b^{n}}=b^{\binom{n}{2}}(a+b)^{\binom{n+1}{2}}.$ This is the first equality.

Now by \eqref{f-e-q1} with respect to $f_1$, we have
$$H_{n-1}^{(2)}(f_1(x))=\frac{H_{n}^1(f_1)-b^nH_n(f_1(x))}{ab^{n-1}}=b^{\binom{n-1}{2}}(a+b)^{\binom{n}{2}}\frac{(a+b)^{n}-b^{n}}{a}.$$

To derive the second equality, we apply \eqref{f-e-q1} with respect to $F_1$. This corresponds to $\alpha=a+t, \beta=b$. Then we obtain $$H_{n}^1(F_1)=(a+t)b^{n-1}H_{n-1}^{(2)}(f_1(x))+b^nH_n(f_1(x))=b^{\binom{n}{2}}(a+b)^{\binom{n}{2}}((a+b)^n+t\frac{((a+b)^n-b^n)}{a}).$$

Similarly we can derive the third equality. This corresponds to $\alpha=a, \beta=2b$, and we obtain
$$H_{n}^1(G_1)=a(2b)^{n-1}H_{n-1}^{(2)}(f_1(x))+(2b)^nH_n(f_1(x))=2^{n-1}b^{\binom{n}{2}}(a+b)^{\binom{n}{2}}((a+b)^n+b^n).$$

The proof of the remaining three equalities is almost the same, except that we need to use \eqref{f-e-q2} instead of \eqref{f-e-q1}.
%
%
%
%
%
\end{proof}

For the case $m\ge 3$, we apply Lemmas \ref{f_m(x))} and \ref{q(x,t)1} to obtain the following result.
\begin{thm}\cite{X. K. Chang. X. B. Hu and Y. N. Zhang,J.Cigler}\label{H(G)}
For  $m\geq 3,$ we have
\begin{align*}
H_{mn}^1(f_m(x))&=(-1)^{\binom{m}{2} n }b^{mn^2}, \\
H_{mn+1}^1(f_m(x))&=(-1)^{\binom{m}{2} n}(n+1)a b^{mn^2+2n},\\
H_{mn-1}^1(f_m(x))&=-(-1)^{\binom{m}{2} n} na b^{mn^2-2n},
\end{align*}
and $ H_{n}^1(f_m(x))=0 $ for all other $n$;\\
\begin{align*}
H_{mn}^1(F_m(x))&=(-1)^{\binom{m}{2} n }b^{mn^2}, \\
H_{mn+1}^1(F_m(x))&=(-1)^{\binom{m}{2} n}(t+(n+1)a) b^{mn^2+2n},\\
H_{mn-1}^1(F_m(x))&=-(-1)^{\binom{m}{2} n}(t+ na) b^{mn^2-2n},
\end{align*}
and $ H_{n}^1(F_m(x))=0 $ for all other $n$;\\
\begin{align}
H_{mn}^1(G_m(x))&=(-1)^{\binom{m}{2} n }2^{mn}b^{mn^2}, \nonumber\\
H_{mn+1}^1(G_m(x))&=(-1)^{\binom{m}{2} n}(2n+1)a b^{mn^2+2n},\label{conG1}\\
H_{mn-1}^1(G_m(x))&=-(-1)^{\binom{m}{2} n} (2n-1)2^{mn-2}a b^{mn^2-2n},\nonumber
\end{align}
and $ H_{n}^1(G_m(x))=0 $ for all other $n$.
\end{thm}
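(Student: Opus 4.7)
The plan is to apply the $m \geq 3$ case of Lemma \ref{q(x,t)1}, which for $q_m$ reads
\[
H_{n}^1(q_m(x,t)) = \alpha(t)\,\beta^{n-1}\,H_{n-1}(x^{m-3}f_m(x)) + (-1)^{\binom{m}{2}}\beta^{n}\,b^{n-m}\,H_{n-m}^1(f_m(x)).
\]
The first term on the right is read off directly from Lemma \ref{f_m(x))} with $j = m-3$, while the second term requires knowing $H_{n-m}^1(f_m)$ itself. So I would first compute $H_n^1(f_m)$ by specializing $q_m$ to $f_m$ (i.e.\ $\alpha(t)=a$, $\beta=b$) and iterating the resulting recursion; then $H_n^1(F_m)$ and $H_n^1(G_m)$ will fall out in one step.

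The key observation is that, by Lemma \ref{f_m(x))}, $H_{n-1}(x^{m-3}f_m)$ vanishes unless $n - 1 \equiv 0$ or $n - 1 \equiv m - 2 \pmod m$, i.e.\ unless $n \equiv 1$ or $n \equiv -1 \pmod m$. Feeding this into the specialized recursion forces $H_n^1(f_m) = 0$ for all $n \not\equiv 0, \pm 1 \pmod m$ by induction on $n$, since $H_{n-m}^1(f_m)$ has index in the same residue class as $n$. In each of the three surviving classes the recurrence reduces to a first-order linear recursion in $k := \lfloor n/m \rfloor$: for $n = mk$ only the second term survives and a short induction yields $H_{mn}^1(f_m) = (-1)^{\binom{m}{2}n} b^{mn^2}$; for $n = mk+1$ both terms contribute and solving the inhomogeneous recursion produces the $(n+1)a$ factor; the class $n = mk - 1$ is analogous and produces the $na$ factor together with the extra minus sign.

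With $H_n^1(f_m)$ in hand, the formulas for $H_n^1(F_m)$ and $H_n^1(G_m)$ follow in a single step from Lemma \ref{q(x,t)1}, by plugging in $\alpha(t) = a+t,\,\beta = b$ and $\alpha(t) = a,\,\beta = 2b$ respectively. Crucially, the second term of the lemma always features $H_{n-m}^1(f_m)$ itself — not $H_{n-m}^1(F_m)$ or $H_{n-m}^1(G_m)$ — so no further iteration is required, and the vanishing for $n \not\equiv 0, \pm 1 \pmod m$ carries over verbatim. The extra $t$ contributions in $F_m$ and the factors of $2$ in $G_m$ enter only through the single surviving occurrence of $\alpha(t)$ and $\beta^n$ in the first term.

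The hard part will be the sign and exponent bookkeeping. In particular, one must verify parity identities such as $\binom{m-2}{2} \equiv \binom{m}{2} + 1 \pmod 2$ in order to reconcile the $(-1)^{\binom{m-j-1}{2}}$ factors from Lemma \ref{f_m(x))} with the claimed closed forms, and the class $n \equiv -1 \pmod m$ needs a separate base-case check at $n = m-1$ (with $H_{-1}^1(f_m)$ interpreted as $0$) to start the induction. Otherwise the proof is essentially a mechanical unwinding of the two lemmas already proved in Section \ref{sec:Gessel-Xin-method}.
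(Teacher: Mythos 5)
Your proposal is correct and is exactly the paper's route: the paper obtains this theorem precisely by combining the $m\ge 3$ case of Lemma \ref{q(x,t)1} with Lemma \ref{f_m(x))}, first specializing to $\alpha(t)=a$, $\beta=b$ to get a self-recursion for $H_n^1(f_m)$ that is solved in the three residue classes $n\equiv 0,\pm 1\pmod m$ (all other classes vanishing by induction), and then reading off $F_m$ and $G_m$ in a single further application with $\alpha(t)=a+t$, $\beta=b$ and $\alpha(t)=a$, $\beta=2b$ — including your parity identity $\binom{m}{2}\equiv\binom{m-2}{2}+1\pmod 2$ and the convention $H_{-1}^1(f_m)=0$ at the base case $n=m-1$. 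One remark worth recording: executing your one-step computation for $G_m$ at index $mn+1$ actually yields $(-1)^{\binom{m}{2}n}(2n+1)2^{mn}ab^{mn^2+2n}$, so \eqref{conG1} as printed is missing the factor $2^{mn}$ (for $m=3$, $n=1$, $a=b=1$ one checks directly that $H_4^1(G_3)=-24=-3\cdot 2^{3}$, not $-3$), consistent with the powers of $2$ appearing in the other two $G_m$ formulas.
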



\subsection{For $H_n^{(2)}(q_m(x,t))$}
The case $m\le 2$ follows from Theorems \ref{Hq1}, \ref{H(f)} and \ref{Hf11}, and the condensation formula \eqref{e-condensation}.
We obtain the
following result.
\begin{thm}\cite{X. K. Chang. X. B. Hu and Y. N. Zhang,J.Cigler}\label{Hq12}
\begin{align*}
  H_n^{(2)}(f_1(x))&=b^{\binom{n}{2}}(a+b)^{\binom{n+1}{2}}\frac{(a+b)^{n+1}-b^{n+1}}{a}, \\
  H_n^{(2)}(F_1(x))&=b^{\binom{n}{2}}(a+b)^{\binom{n}{2}}\sum_{j=0}^n(a+b)^{n-j}b^{n-j}\left((a+b)^j+t\frac{((a+b)^j-b^j)}{a}\right)^2,\\
  H_n^{(2)}(G_1(x))&=2^{n}b^{\binom{n+1}{2}}(a+b)^{\binom{n+1}{2}}(1+\sum_{j=1}^n\frac{((a+b)^j+b^j)^2}{2(a+b)^jb^j}).
\end{align*}
\begin{align*}
  H_n^{(2)}(f_2(x))&=b^{\binom{n}{2}} \sum _{j=0}^nb^{n-j}(Fib_{j+1}(a,-b))^2,\\
  H_n^{(2)}(F_2(x))&=b^{\binom{n}{2}}\sum _{j=0}^nb^{n-j}\left(Fib_{j+1}(a,-b)+tFib_j(a,-b)\right)^2,\\
  H_n^{(2)}(G_2(x))&=2^{n}b^{\binom{n+1}{2}}\left(1+\sum _{j=1}^n\frac{(Fib_{j}(a,-b)-2^{j-1}bFib_{j-1}(a,-b))^2}{2b^j}\right).
\end{align*}
\end{thm}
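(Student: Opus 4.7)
The plan is to apply the condensation formula \eqref{e-condensation} with $k=0$, which (after shifting the index) yields the recursion
\[
H_n^{(2)}(A) \;=\; \frac{H_{n+1}(A)\,H_{n-1}^{(2)}(A) + \bigl(H_n^{(1)}(A)\bigr)^2}{H_n(A)},
\]
valid whenever $H_n(A)\neq 0$. The first step is to verify the non-vanishing hypothesis. By Theorem \ref{Hq1}, each of $H_n(f_1), H_n(F_1), H_n(G_1)$ is a nonzero scalar multiple of $b^{\binom{n}{2}}(a+b)^{\binom{n}{2}}$; by Theorem \ref{H(f)} specialized to $m=2$, the values $H_n(f_2), H_n(F_2), H_n(G_2)$ are nonzero powers of $b$ for \emph{every} $n$, since both parities are covered by the formulas $H_{2k}, H_{2k+1}$. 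Since all six target formulas are polynomial in $a,b,t$, it suffices to work under the generic assumption $b(a+b)\neq 0$, under which the recursion applies in all six cases.

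I would then induct on $n$, with base case $n=0$ giving $H_0^{(2)}=1$ in agreement with each formula (the empty sums reduce to the initial term, and for $f_1$ the factor $((a+b)-b)/a=1$). For the inductive step, one substitutes the explicit expressions for $H_n, H_{n+1}$ from Theorem \ref{Hq1} or \ref{H(f)}, for $H_n^{(1)}$ from Theorem \ref{Hf11}, and the inductive hypothesis for $H_{n-1}^{(2)}$, into the right-hand side of the recursion. The key combinatorial identity that makes the exponents line up is
\[
\binom{n-1}{2}+n \;=\; \binom{n}{2}+1,
\]
so that the ratio $H_{n+1}/H_n$ combined with the prefactor of $H_{n-1}^{(2)}$ produces exactly the prefactor of $H_n^{(2)}$ times the geometric weight ($b^{n-j}$ in the $m=2$ cases, $(a+b)^{n-j}b^{n-j}$ in the $m=1$ cases) that correctly shifts the summation bounds. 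The second summand $(H_n^{(1)})^2/H_n$ then contributes precisely the missing $j=n$ term.

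For the closed-form case $f_1$, the two contributions combine via the elementary identity
\[
b\cdot\frac{(a+b)^n-b^n}{a}+(a+b)^n \;=\; \frac{(a+b)^{n+1}-b^{n+1}}{a},
\]
yielding the stated formula directly. For the sum-type cases, one only needs to verify that $(H_n^{(1)})^2/H_n$ equals the prefactor of $H_n^{(2)}$ times the $j=n$ summand; for example for $F_2$ this amounts to the identity $(H_n^{(1)}(F_2))^2/H_n(F_2)=b^{\binom{n}{2}}\bigl(Fib_{n+1}(a,-b)+t\,Fib_n(a,-b)\bigr)^2$, which is immediate from Theorems \ref{H(f)} and \ref{Hf11}.

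The main obstacle is the algebraic bookkeeping for the sum-type formulas, especially $G_1, F_2, G_2$, which mix powers of $2$ with Fibonacci-like terms $Fib_n(a,-b)$. However, since each sum has the form $\text{prefactor}\cdot\sum_{j=0}^n(\text{geometric})^{n-j}\,T_j$, the induction naturally splits into a geometric rescaling of the $j\leq n-1$ portion (handled by the exponent identity above) plus a new final term at $j=n$ (handled by the direct evaluation of $(H_n^{(1)})^2/H_n$). No new ideas beyond the condensation recursion and the previously established formulas for $H_n$ and $H_n^{(1)}$ are required; the proof is essentially a six-fold parallel verification.
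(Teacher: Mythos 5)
Your proposal is exactly the paper's (essentially unwritten) argument: the paper disposes of Theorem \ref{Hq12} with the single remark that the case $m\le 2$ follows from Theorems \ref{Hq1}, \ref{H(f)} and \ref{Hf11} together with the condensation formula \eqref{e-condensation}, which is precisely your genericity-justified induction, with your exponent identity $\binom{n-1}{2}+n=\binom{n}{2}+1$ and the identification of $(H_n^{(1)})^2/H_n$ with the $j=n$ summand doing the right bookkeeping. One caveat worth recording: your ``immediate'' $j=n$ check does \emph{not} close for $G_2$ as printed --- e.g.\ the displayed formula gives $H_1^{(2)}(G_2)=2b+1$ whereas directly $H_1^{(2)}(G_2)=g_2=a^2+2b$ --- because both $G_2$ displays carry misprints (they should read $H_n^1(G_2)=2^{n-1}b^{\binom{n}{2}}\bigl(a\,Fib_n(a,-b)-2b\,Fib_{n-1}(a,-b)\bigr)$ and, correspondingly, $\bigl(a\,Fib_j(a,-b)-2b\,Fib_{j-1}(a,-b)\bigr)^2$ in the $H_n^{(2)}(G_2)$ summand), and with these corrections your induction verifies all six formulas.
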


The computation for the case $m=3$ is different.
\begin{thm}\cite{X. K. Chang. X. B. Hu and Y. N. Zhang,J.Cigler}
\begin{align*}
  H_{3n}^{(2)}(f_3(x))&=(-1)^{n+1}b^{3n^2+n-1}(a^3\sum_{i=0}^ni^2-(n+1)b), \\
  H_{3n+1}^{(2)}(f_3(x))&=(-1)^{n}(n+1)^2a^2b^{3n^2+3n}, \\
  H_{3n+2}^{(2)}(f_3(x))&=(-1)^{n}b^{3n^2+5n+1}\left(a^3\sum_{i=0}^{n+1}i^2-(n+1)b\right).
\end{align*}
\begin{align*}
  H_{3n}^{(2)}(F_3(x))&=(-1)^{n}b^{3n^2+n-1}\left(a^3\sum_{i=0}^ni^2-(n+1)b+nat(t+(n+1)a)\right), \\
  H_{3n+1}^{(2)}(F_3(x))&=(-1)^{n}(t+(n+1)a)^2b^{3n^2+3n}, \\
  H_{3n+2}^{(2)}(F_3(x))&=(-1)^{n}b^{3n^2+5n+1}\left(a^3\sum_{i=0}^{n+1}i^2-(n+1)b+(n+1)at(t+(n+2)a)\right).
\end{align*}
\begin{align}
  H_{3n}^{(2)}(G_3(x))&=(-1)^{n}\left((2n+1)^22^{3n}b^{3n^2+n}-\binom{2n+1}{3}2^{3n-1}a^3b^{3n^2+n-1}\right),\nonumber \\
  H_{3n+1}^{(2)}(G_3(x))&=(-1)^{n}(2n+1)^22^{3n}a^2b^{3n^2+3n}, \label{conG2}\\
  H_{3n+2}^{(2)}(f_3(x))&=(-1)^{n+1}\left((2n+1)2^{3n+2}b^{3n^2+5n+2}-\binom{2n+3}{3}2^{3n+1}a^3b^{3n^2+5n+1}\right).\nonumber
\end{align}
\end{thm}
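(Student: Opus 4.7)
The strategy is to continue applying Sulanke--Xin's quadratic transformation $\tau$ (Proposition \ref{xinu(0,i,ii)}) iteratively, exactly as done for $H_n^{(2)}(C(x))$ in Section \ref{sec:catalan}. For each $q_3 \in \{f_3, F_3, G_3\}$, I would first rewrite the target as an ordinary Hankel determinant by truncating the first two Taylor coefficients,
\[
H_n^{(2)}(q_3) \;=\; H_n(A_0), \qquad A_0(x) \;=\; \frac{q_3(x,t) - q_3(0,t) - ([x^1]\,q_3(x,t))\,x}{x^2},
\]
where $(\alpha(t), \beta) = (a,b), (a+t,b), (a,2b)$ correspond to $f_3, F_3, G_3$ respectively. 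Using the functional equation $q_3(x,t) = 1/(1 - \alpha(t)x - \beta x^3 f_3(x))$ together with the defining equation for $f_3$, clearing denominators shows that $A_0(x)$ itself satisfies a quadratic functional equation of the form \eqref{xinF(x)}, so that $\tau$ applies.

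Next I would iterate $\tau$. Since the target formulas split cleanly across the three residue classes $n \equiv 0, 1, 2 \pmod{3}$, and since each case (ii)/(iii) application of $\tau$ lowers $n$ by $d+1$, I expect to find a shifted periodic continued fraction
\[
A_0^{(p)} \mathop{\longrightarrow}\limits^\tau A_1^{(p)} \mathop{\longrightarrow}\limits^\tau \cdots \mathop{\longrightarrow}\limits^\tau A_{q-1}^{(p)} \mathop{\longrightarrow}\limits^\tau A_0^{(p+1)}
\]
analogous to the Catalan $A_p \to A_{p+1}$. The exact period $q$ and the explicit family $A_j^{(p)}(x)$ can be read off by running $\tau$ symbolically in Maple for the first several iterations. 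Each step of $\tau$ contributes either a scalar factor $u(0)^n$ (case (i)) or a sign $(-1)^{\binom{d+1}{2}}$ (cases (ii), (iii)); accumulating these over $p = 0, 1, \ldots, n-1$ and telescoping yields the closed product formula. The polynomial-in-$n$ factors in the statement, such as $(n+1)^2$, the cubic sum $a^3\sum_{i=0}^{n}i^2$, and $\binom{2n+1}{3}$, will arise naturally from telescoping rational functions of $p$ of the form $p^2/(p+1)^2$, $(p+\mathrm{const})$, etc., just as $n+1 = \prod_p (p^2-1)/p^2 \cdot (\mathrm{adjustment})$ did in the Catalan case.

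The main obstacle will be guessing the correct ansatz for $A_j^{(p)}(x)$. In the Catalan $H^{(2)}$ proof, the parameter $p$ entered through tidy expressions like $(p+1)(p+2)$; for $m=3$ the analogue will involve $p$ together with $a, b$ (and $t$ for $F_3$, and an extra factor of $2$ for $G_3$), and the right form must be reverse-engineered from the first few symbolic $\tau$-iterations. The $G_3$ case is the most delicate because $\beta = 2b$ produces compounding powers of $2$ at each step, which combine with the polynomial-in-$p$ factors to yield the trinomial coefficients $\binom{2n+1}{3}$ and $\binom{2n+3}{3}$; these must be tracked carefully. Once the ansatz is pinned down, verifying the cyclic relation $A_0^{(p)} \to \cdots \to A_0^{(p+1)}$ reduces to a routine (if lengthy) symbolic check via Proposition \ref{xinu(0,i,ii)}, and the three closed-form expressions in each residue class follow by telescoping together with the base values $H_0^{(2)}, H_1^{(2)}, H_2^{(2)}$ computed directly from the leading Taylor coefficients of $q_3$.
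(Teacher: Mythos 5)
Your proposal is correct and follows essentially the same route as the paper: the paper sets $A_0=\frac{f_3(x)-ax-1}{x^2}$, derives its quadratic functional equation, iterates $\tau$ to discover a shifted periodic continued fraction of period three (the family $A_{3p+1},A_{3p+2},A_{3p+3}$ with $p$-dependent coefficients), telescopes the scalar factors from Proposition \ref{xinu(0,i,ii)}, and handles $F_3$ and $G_3$ ``similarly'' with the parameters $(\alpha(t),\beta)$ you name. One small correction: the telescoping does not terminate at $H_j^{(2)}(q_3)$ for $j\le 2$, but at the small-size determinants $H_0,H_1,H_2$ of the \emph{deep} symbolic family $A_{3n+1}$, which depend on $n$ and are computed from the leading Taylor coefficients of $A_{3n+1}$ rather than of $q_3$ itself.
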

\begin{proof}

Let $A_0=\displaystyle\frac{f_3(x)-ax-1}{x^2}$.
Then we obtain
$$A_0(x)=\frac {{a}^{2}b{x}^{3}+2\,ab{x}^{2}+bx+{a}^{2}}{1-ax-2\,b{x}^{3}-2\,ab{x}^{4}-b{x}^{5}A_0(x)}.$$
Repeat application of Proposition \ref{xinu(0,i,ii)} gives $A_1,$ $A_2$, $A_3$ $\cdots$. We have
\begin{align}
 H_{k} (A_{0})=&\left(a^2\right)^kH_{k-1}(A_{1}) .\label{f-A0-A1-}
\end{align}
For $n \geq 0$, we have
\begin{align*}
H_{k} (A_{3n+1})=&\left(-\frac{(a^3\sum_{i=1}^{n+1}i^2-(n+1)b)(a^3\sum_{i=1}^{n}i^2-(n+1)b)}{(n+1)^4a^4}\right)^{k}\!\!H_{k-1} (A_{3n+2}),\\
H_{k-1} (A_{3n+2})=&\left(\frac{(n+1)^2(a^3\sum_{i=1}^{n+1}i^2-(n+2)b)a^2b}{(a^3\sum_{i=1}^{n+1}i^2-(n+1)b)^2}\right)^{k-1}\!\!\!\!\! H_{k-2}(A_{3n+3}),\\
H_{k-2} (A_{3n+3})=&\left(-\frac{(n+2)^2(a^3\sum_{i=1}^{n+1}i^2-(n+1)b)a^2b}{(a^3\sum_{i=1}^{n+1}i^2-(n+2)b)^2}\right)^{k-2}\!\!\!\!\!H_{k-3}(A_{3n+4}).
\end{align*}
Combining the above formulas gives
\begin{align}\label{f-A3n+1}
  H_{k} (A_{3n+1})=&\frac{b^{2k-3}}{a^6}\cdot\frac{(n+2)^{2k-4}}{(n+1)^{2k+2}}\cdot\frac{(a^3\sum_{i=1}^{n}i^2-(n+1)b)^{k}}
  {(a^3\sum_{i=1}^{n+1}i^2-(n+2)b)^{k-3}}H_{k-3}(A_{3n+4}).
\end{align}
Let $k=3n+j$, where $0\leq j\leq 2$. We know that
\begin{align*}
  H_{3n+j} (A_{1})=&(-1)^{n+j}(n+1)^{2j+2}\cdot\frac{b^{3n(n+1)+2jn+j}}{a^{6n}(a^3\sum_{i=1}^{n}i^2-(n+1)b)^{j}}H_{j}(A_{3n+1}).
\end{align*}
Then together with \eqref{f-A0-A1-} we obtain
\begin{align}
  H_{3n+j+1} (A_{0})=&(-1)^{n+j}(n+1)^{2j+2}\cdot\frac{b^{3n(n+1)+2jn+j}a^{2j+2}}{(a^3\sum_{i=1}^{n}i^2-(n+1)b)^{j}}H_{j}(A_{3n+1}).\label{f-A0}
\end{align}
The initial values are
\begin{align*}
 H_{0}(A_{3n+1})=&1, \\
 H_{1}(A_{3n+1})=&-\frac{(a^3\sum_{i=1}^{n+1}i^2-(n+1)b)(a^3\sum_{i=1}^{n}i^2-(n+1)b)}{(n+1)^4a^4},\\
 H_{2}(A_{3n+1})=&\frac{b(a^3\sum_{i=1}^{n}i^2-(n+1)b)^2(a^3\sum_{i=1}^{n+1}i^2-(n+2)b)}{(n+1)^6a^6}.
\end{align*}
Then  $H_n^{(2)}(f_3(x))$ follows by the above initial values and \eqref{f-A0}. Similarly we can compute $H_n^{(2)}(F_3(x))$ and $H_n^{(2)}(G_3(x))$.
\end{proof}

For the case $m\ge 4$, we need the following result, whose proof will be given a bit later.
\begin{lem}\label{Hf-1}
For $m \geq 4$,
\begin{align*}
 H_{mn}(f_m(x)-1)&=-(-1)^{\binom{m-1}{2}n}(n-1)b^{mn^2-n},\\
 H_{mn-1}(f_m(x)-1)&=(-1)^{\binom{m-1}{2}n}n^2a^2b^{mn^2-3n},\\
 H_{mn+1}(f_m(x)-1)&=-(-1)^{\binom{m-1}{2}n}nb^{mn^2+n},\\
 H_{mn+2}(f_m(x)-1)&=-(-1)^{\binom{m-1}{2}n}(n+1)^2a^2b^{mn^2+3n},
\end{align*}
and $H_n(f_m(x)-1)=0$ for all other $n$.
\end{lem}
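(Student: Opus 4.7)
The plan is to compute $H_n(f_m(x) - 1)$ directly by Sulanke--Xin's continued fraction method, seeking a shifted periodic continued fraction of period $m$. Setting $F_0 := f_m(x) - 1$ and substituting $f_m = 1 + F_0$ into $f_m - 1 = ax f_m + b x^m f_m^2$ (which rearranges \eqref{generating function f}), one obtains the quadratic fixed-point equation
$$F_0 = \frac{x(a + bx^{m-1})}{1 - ax - 2bx^m - bx^m F_0}.$$
The extra polynomial factor $a + bx^{m-1}$ in the numerator prevents a direct application of Proposition \ref{xinu(0,i,ii)}, but dividing numerator and denominator by it brings $F_0$ to the normalized form with $d = 1$, $k = m$, and rational data $u(0) = 1/a$, $v(0) = -b/a$.

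Next, I iterate $\tau$: the first step uses case (i) of Proposition \ref{xinu(0,i,ii)} (rescaling by $u(0) = 1/a$, contributing a factor $a^n$ to $H_n$) and subsequent steps use case (iii), since $k = m \geq 4 \geq 2$. The key structural expectation is that for $m \geq 4$ the sequence $F_0 \to F_1 \to F_2 \to \cdots$ stabilizes after a short transient into a shifted periodic family $\{F_i^{(p)}\}_{0 \le i < m,\ p \ge 0}$ with $\tau(F_i^{(p)}) = F_{i+1}^{(p)}$ for $i < m - 1$ and $\tau(F_{m-1}^{(p)}) = F_0^{(p+1)}$, in the sense of the authors' definition. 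Each full period of $\tau$ then shifts the Hankel index by $m$ and multiplies $H_k(F_0)$ by a definite rational factor in $a, b, n$, producing a recursion of the form $H_k(F_0) = \rho_k(n, a, b) \cdot H_{k-m}(F_0)$ whose explicit shape depends only on the residue $k \pmod m$. The vanishing of $H_k(f_m - 1)$ outside the residues $\{-1,0,1,2\}\pmod m$ I would obtain by combining this recursion with small-$n$ base cases, noting that for $3 \le k \le m - 2$ every entry of the Hankel matrix equals $a^{i+j}$ except that the $(0,0)$ entry is $0$; hence the matrix is a rank-$1$ matrix minus $e_0 e_0^\top$, whose determinant vanishes once its size exceeds $2$. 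Anchoring the surviving four residues on explicit small determinants such as $H_2(f_m - 1) = -a^2$ and $H_{m-1}(f_m - 1) = (-1)^{\binom{m-1}{2}} a^2 b^{m-3}$ then propagates the closed forms to all $n$ via the period-$m$ recursion.

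The hard part will be executing the first few $\tau$-steps cleanly: the rational $u, v$ produced by dividing out $a + bx^{m-1}$ make the decomposition $u = u_L + x^{d+2} u_H$ required by Proposition \ref{xinu(0,i,ii)}(iii) notationally heavy, and the period-$m$ pattern only becomes visible after several iterations are computed. Once the shape of the shifted periodic family is guessed from a Maple-assisted computation at small $m$ (say $m = 4, 5$), verification that $\tau^m(F_0^{(p)}) = F_0^{(p+1)}$ is mechanical, and the evaluation of $H_n(f_m - 1)$ reduces to bookkeeping over one period plus induction on $p$ — exactly the strategy that succeeded for the Catalan $H_n^{(2)}(C(x))$ in Section \ref{sec:catalan}.
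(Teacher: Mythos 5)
Your overall plan coincides with the paper's actual proof: the same functional equation $F_0=\dfrac{x(a+bx^{m-1})}{1-ax-2bx^m-bx^mF_0}$ for $f_m-1$, the same normalization ($d=1$, $k=m$, $u(0)=1/a$, first a case (i) rescaling contributing $a^n$, then case (iii)), and the same endgame of a shifted periodic continued fraction yielding a recursion of step $m$ in the Hankel index, anchored by initial values. However, your structural guess about the orbit is wrong: the stable pattern is \emph{not} an $m$-step cycle with unit index shifts. After a transient $A_0\to A_1\to A_2\to A_3\to A_4$, the paper finds a \emph{four}-step cycle $A_{4p}\to A_{4p+1}\to A_{4p+2}\to A_{4p+3}\to A_{4(p+1)}$ whose Hankel index shifts are $m-3,\,1,\,1,\,1$ (the first map in each cycle has $d=m-4$), summing to $m$, with the shift parameter $p$ entering the coefficients of $u$ and $v$; the composite multiplier is
\begin{align*}
H_n(A_{4p})=-(-1)^{\binom{m-1}{2}}p^n(p+1)^{-3n+m-2}(p+2)^{2n-2m+2}a^{-2m}b^{2n-m+3}H_{n-m}(A_{4(p+1)}).
\end{align*}
Since you defer the guess to machine computation and then verify, this misprediction is self-correcting, but a period-$m$ ansatz of the form $\tau(F_i^{(p)})=F_{i+1}^{(p)}$, $0\le i<m$, would never be confirmed by the data.

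The genuinely flawed step is your rank argument for the vanishing. The claim that for $3\le k\le m-2$ every entry of $\mathcal{H}_k(f_m-1)$ equals $a^{i+j}$ except the $(0,0)$ entry holds only while $2k-2\le m-1$: as soon as $k\ge \lfloor m/2\rfloor+1$, entries with $i+j\ge m$ acquire $b$-corrections (e.g.\ $d_m=a^m+b$, $d_{m+1}=a^{m+1}+3ab$), so the matrix is not of the form ``rank one minus $e_0e_0^{\top}$'' --- already for $m=6$, $k=4$ the $(3,3)$ entry is $a^6+b$. The conclusion is rescuable (the corrections occupy the last $2k-1-m$ antidiagonals, hence lie in the last $2k-1-m$ rows, giving rank at most $2+(2k-1-m)<k$ precisely when $k\le m-2$), but that is not the argument you gave, and in any case it only covers the first window of zeros. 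In the paper \emph{all} vanishing cases, including large $n$ in the non-surviving residues, come out of the cycle recursion together with the computed in-period initial values of the transformed series (e.g.\ $H_j(A_{4p})=0$ for $j$ outside four special residues), and that is where you should take them from once the corrected orbit is in hand. Your explicit anchors $H_2(f_m-1)=-a^2$ and $H_{m-1}(f_m-1)=(-1)^{\binom{m-1}{2}}a^2b^{m-3}$ are correct and match the lemma at $n=0$ and $n=1$; note also that you must translate anchors through the pre-periodic relation $H_n(A_0)=(-1)^{\binom{m-1}{2}}(2a)^{2n-2m-2}b^{2n-m-1}H_{n-m-2}(A_4)$ rather than anchoring directly on $f_m-1$.
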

Then by the above lemma   and the formula
$$H_n(q_m(x,t))=H_{n-1}^2(q_m(x,t))+H_n(q_m(x,t)-1),$$
which is a consequence of \eqref{e-cDxy}, we obtain the following result, which was conjectured in \cite{J.Cigler}, and proved in \cite{X. K. Chang. X. B. Hu and Y. N. Zhang}.
\begin{thm}\cite{X. K. Chang. X. B. Hu and Y. N. Zhang,J.Cigler}\label{f-2}
For  $m\geq 4,$ we have
\begin{align*}
  H_{mn}^{(2)}(f_m(x))&=(-1)^{\binom{m-1}{2} n}(n+1)b^{mn^2+n}, \\
  H_{mn+1}^{(2)}(f_m(x))&=(-1)^{\binom{m-1}{2} n }(n+1)^2 a^2 b^{mn^2+3n},\\
  H_{mn-1}^{(2)}(f_m(x))&=(-1)^{\binom{m-1}{2} n}n b^{mn^2-n}, \\
  H_{mn-2}^{(2)}(f_m(x))&=-(-1)^{\binom{m-1}{2} n }n^2 a^2 b^{mn^2-3n},
\end{align*}
and $H_{n}^{(2)}(f_m(x))=0 $ for all other $n$;\\
\begin{align*}
  H_{mn}^{(2)}(F_m(x))&=(-1)^{\binom{m-1}{2} n}(n+1)b^{mn^2+n}, \\
  H_{mn+1}^{(2)}(F_m(x))&=(-1)^{\binom{m-1}{2} n }(t+(n+1)a)^2 b^{mn^2+3n},\\
  H_{mn-1}^{(2)}(F_m(x))&=(-1)^{\binom{m-1}{2} n}n b^{mn^2-n}, \\
  H_{mn-2}^{(2)}(F_m(x))&=-(-1)^{\binom{m-1}{2} n }(t+na)^2 b^{mn^2-3n},
\end{align*}
and $H_{n}^{(2)}(F_m(x))=0 $ for all other $n$;\\
\begin{align*}
  H_{mn}^{(2)}(G_m(x))&=(-1)^{\binom{m-1}{2} n}(2n+1)2^{mn}b^{mn^2+n}, \\
  H_{mn+1}^{(2)}(G_m(x))&=(-1)^{\binom{m-1}{2} n }(2n+1)^2 2^{mn} a^2 b^{mn^2+3n},\\
  H_{mn-1}^{(2)}(G_m(x))&=(-1)^{\binom{m-1}{2} n}(2n-1)2^{mn-1} b^{mn^2-n}, \\
  H_{mn-2}^{(2)}(G_m(x))&=-(-1)^{\binom{m-1}{2} n }(2n-1)^2 2^{mn-3} a^2 b^{mn^2-3n},
\end{align*}
and $ H_{n}^{(2)}(G_m(x))=0 $ for all other $n$.
\end{thm}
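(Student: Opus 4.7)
The plan is to derive Theorem \ref{f-2} from the identity
$$H_n^{(2)}(A) = H_{n+1}(A) - H_{n+1}(A - 1),$$
valid for any power series $A(x)$ with $A(0) = 1$. This is the identity cited in the excerpt: the kernel $(xA(x) - yA(y))/(x - y)$ splits as $1 + (x(A(x) - 1) - y(A(y) - 1))/(x - y)$, so rule \eqref{e-cDxy} gives $H_{n+1}(A) = H_n^{(2)}(A) + H_{n+1}(A - 1)$. Applying this to $A = q_m \in \{f_m, F_m, G_m\}$ reduces Theorem \ref{f-2} to knowing $H_{n+1}(q_m)$ (already available from Theorem \ref{H(f)}) and $H_{n+1}(q_m - 1)$.

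For $q_m = f_m$, the four nonvanishing values of $H_{n+1}(f_m - 1)$ are precisely Lemma \ref{Hf-1}. For $q_m = F_m$ and $q_m = G_m$, I would establish entirely analogous closed-form formulas by the same method that proves Lemma \ref{Hf-1}: write $q_m - 1 = x R(x,t)$, derive a quadratic functional equation for $R$ from \eqref{e-qm} and \eqref{e-cf-fm}, and iterate the Sulanke-Xin transformation $\tau$ of Proposition \ref{xinu(0,i,ii)}. After a few iterations a shifted periodic pattern of the form $A_p \to A_{p+1}$, with $p$ threaded through an explicit parametric family, emerges, and the telescoping product of the local scaling factors produced by $\tau$ yields the desired closed forms.

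With those ingredients Theorem \ref{f-2} follows by a case split modulo $m$. Writing the identity as $H_{N-1}^{(2)}(q_m) = H_N(q_m) - H_N(q_m - 1)$ and letting $N \equiv r \pmod{m}$: if $r = 0$, both terms are nonzero and combine to give $H_{mk-1}^{(2)}(q_m)$; if $r = 1$, both contribute and give $H_{mk}^{(2)}(q_m)$; if $r = 2$, Theorem \ref{H(f)} forces $H_N(q_m) = 0$ and the identity reads $H_{mk+1}^{(2)}(q_m) = -H_{mk+2}(q_m - 1)$; if $r \equiv -1 \pmod{m}$, again $H_N(q_m) = 0$ and $H_{mk-2}^{(2)}(q_m) = -H_{mk-1}(q_m - 1)$. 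For every other residue both terms on the right vanish. The hypothesis $m \ge 4$ makes the four distinguished residues $\{-1, 0, 1, 2\}$ pairwise distinct modulo $m$, so the case split is clean. Substituting the explicit values of $H_N(q_m)$ from Theorem \ref{H(f)} and of $H_N(q_m - 1)$ from Lemma \ref{Hf-1} (and its $F_m, G_m$ analogs) then reproduces the twelve nonzero formulas in the theorem after straightforward simplification of the powers of $b$ and of the signs $(-1)^{\binom{m-1}{2} n}$.

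The main obstacle is establishing the $F_m$ and $G_m$ analogs of Lemma \ref{Hf-1}. The invariant parametric form of the rational functions $A_p$ under $\tau$ is not a priori obvious, and the extra parameters (the free $t$ in $F_m$ and the coefficient doubling in $G_m$) perturb the intermediate expressions enough that guessing the correct family is an experimental step, perhaps best handled with computer algebra. Once the family is conjectured, verifying $\tau(A_p) = A_{p+1}$ reduces to a mechanical rational-function identity, after which all remaining bookkeeping is routine.
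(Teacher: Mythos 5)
Your proposal follows essentially the same route as the paper: the paper derives Theorem \ref{f-2} from exactly your identity $H_n(q_m(x,t))=H_{n-1}^{(2)}(q_m(x,t))+H_n(q_m(x,t)-1)$ (a consequence of \eqref{e-cDxy}) together with Lemma \ref{Hf-1}, whose proof is the shifted-periodic $\tau$-iteration you describe, and your residue-class bookkeeping modulo $m$ (with $m\ge 4$ keeping $-1,0,1,2$ distinct) matches the paper's implicit computation. The one point where you are more explicit than the paper is the need for $F_m-1$ and $G_m-1$ analogues of Lemma \ref{Hf-1}, which the paper glosses over but which are obtained by the same method you propose.
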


\begin{proof}[Proof of Lemma \ref{Hf-1}]
Let $A_0=f_m(x)-1$, then we have $$A_0(x)=\frac{bx^m+ax}{1-ax-2bx^m-bx^mA_0(x)}.$$

By Proposition \ref{xinu(0,i,ii)}, we get $$A_1(x)=\frac{bx^{m-4}}{a-a^2x+2bx^{m-1}-x^3(bx^{m-1}+a)A_1(x)},$$ and
\begin{gather}
  H_n(A_0(x))=(-1)^{\binom{2}{2}}a^nH_{n-2}(A_1(x)).\label{f-1}
\end{gather}

In the same way, we obtain $A_2(x),A_3(x)\cdots$, then we list the results as follows,
 $$A_2(x)=\frac{bx(bx^{m-1}+2a^2x-a)}{a(a-a^2x+2bx^{m-1}-ax^{m-2}A_2(x))},$$
 $$A_3(x)=\frac{a^2(4a^2bx^m+4abx^{m-1}+bx^{m-2}+8a^4x^2+4a^3x+4a^2)}{a(-1+ax+8a^3x^3+2bx^{m}+4abx^{m+1})-x^3(bx^{m-1}+2a^2x-a)A_3(x)},$$

\begin{align*}
  A_4(x) =&(bx^{m-4})/(4a^2-4a^3x+2bx^{m-2}+4abx^{m-1}-x^2(4a^2bx^m+4abx^{m-1}+bx^{m-2}\\
  &+8a^4x^2+4a^3x+4a^2)A_4(x)),
\end{align*}
and
\begin{align}
H_n(A_1(x))&=(-1)^{\binom{m-3}{2}}\left(\frac{b}{a}\right)^n\!\!H_{n-m+3}(A_2(x)),\nonumber \\
H_n(A_2(x))&=(-1)^{\binom{2}{2}}\left(-\frac{b}{a}\right)^n\!\!H_{n-2}(A_3(x)),\label{f-1,1}\\
H_n(A_3(x))&=\left(-4a^3\right)^n\!\!H_{n-1}(A_4(x)).\nonumber
\end{align}
By \eqref{f-1} and \eqref{f-1,1},  we know that
\begin{align}
H_n(A_0)=&(-1)^{\binom{m-1}{2}}(2a)^{2n-2m-2}b^{2n-m-1}H_{n-m-2}(A_4).\label{A0A4}
\end{align}

Further transformations suggest us to define
$$A_{4p}(x)= {\frac {{x}^{m-4}}{u(x)-{x}^{2} v(x)A_{4p}(x)}},$$
with
\begin{align*}
\begin{array}{rl}
v(x)=&\left( p+1 \right) ^{2}{a}^{2}b{x}^{m}+2\,p
 \left( p+1 \right) ab{x}^{m-1}+{p}^{2}b{x}^{9}+p \left( p+1 \right) ^{
3}{a}^{4}{x}^{2}+ \left( p+1 \right) ^{2}{a}^{3}x\\
&+p \left( p+1\right) ^{2}{a}^{2}/(b{p}^{2}) ,
\end{array}
\end{align*}
$$u(x)={\frac {2\,p \left( p+1 \right) ab{x}^{m-1}+2\,{p}^{2}b{x}^{m-2}-p \left( p+1 \right) ^{2}{a}^{3}x+p \left( p+1 \right) ^{2}{a}^{2}}{b{p}^{2}}}.$$

Then our algorithm produces
\begin{align*}
H_n(A_{4p})&=(-1)^{\binom{m-3}{2}}\left(\frac{pb}{(p+1)^2a^2}\right)^n\!\!H_{n-m+3}(A_{4p+1})\\
  H_n(A_{4p+1})&=\left( -{\frac {p b}{ \left( p+1 \right) ^{2}{a}^{2}}}\right)^n\!\!H_{n-1}(A_{4p+2}),\\
  H_n(A_{4p+2})& =\left( -{\frac {(p+1)^3{a}^{2}}{  p^{2}}}\right)^n\!\!H_{n-1}(A_{4p+3}),\\
  H_n(A_{4p+3})&=\left( {\frac {p(p+2)^2{a}^{2}}{ \left( p+1 \right) ^{2}}}\right)^n\!\!H_{n-1}(A_{4p+4}).
\end{align*}
Combining the above formulas gives
\begin{gather*}
 H_n(A_{4p})=-(-1)^{\binom{m-1}{2}}p^n(p+1)^{-3n+m-2}(p+2)^{2n-2m+2}a^{-2m}b^{2n-m+3}H_{n-m}(A_{4(p+1)}).
\end{gather*}
Using this recursion together with some initial conditions we obtain
\begin{align*}
H_{mn}(A_4)=&-(-1)^{\binom{m-1}{2}n}(n+2)^22^{-2mn-2}a^{-2mn}b^{mn^2+3n}\\
H_{mn-1}(A_4)=&-(-1)^{\binom{m-1}{2}n}(n+1)(2a)^{-2mn}b^{mn^2+n}\\
H_{mn-2}(A_4)=&-(-1)^{\binom{m-1}{2}n}n(2a)^{-2mn+2}b^{mn^2-n} \\
H_{mn-3}(A_4)=&(-1)^{\binom{m-1}{2}n}(n+1)^22^{-2mn+4}a^{-2mn+6}b^{mn^2-3n},
\end{align*}
and $H_n(A_4)=0$ for other $n$.

The proof is thus completed by applying equation
\eqref{A0A4}.
\end{proof}

\section{Two periodic continued fractions \label{sec:periodic}}
Let us focus on lattice paths from $(0,0)$ to $(n,0)$ that never go below the horizontal axis. In this section, we give two examples of periodic continued fractions.

Our first example   has step set $\{ U=(1,1), D=(1,-1), H_1=(1,0), H_2=(2,0)\}$, where the $H_1$ step is weighted by $t+1$, the $H_2$ step is weighted by $t$, and all other steps is weighted by $1$.
Then the generating function $F(x)$ of such lattice paths satisfies the following quadratic functional equation:
\[ F(x)=1+(t+1)xF(x)+tx^2F(x)+x^2F(x)^2.\]
\begin{thm}
Let $F(x)$ be as above with $t$ an indeterminate.  Then we have
$$H_{3n}(F(x))=(1+t)^{3n^2},\ H_{3n+1}(F(x))=(1+t)^{3n^2+2n},\  H_{3n+2}(F(x))=(1+t)^{3n^2+4n+1}.$$
\end{thm}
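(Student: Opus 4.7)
The plan is to apply Sulanke--Xin's quadratic transformation $\tau$ of Proposition~\ref{xinu(0,i,ii)} five times to $F_0 := F$, show that $F_5 = F_0$, and thereby obtain a periodic continued fraction of period $5$ that yields a three-step recursion for $H_n(F)$.

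First, use the factorization $1 - (1+t)x + tx^2 = (1-x)(1-tx)$ to rewrite the defining equation as
\[
F(x) \;=\; \frac{1}{(1-x)(1-tx) \,-\, x^2 F(x)},
\]
so that $F_0$ has $d=0$, $k=2$, $u_0(x) = 1-(1+t)x - tx^2$, $v_0(x) = -1$, and $u_0(0) = 1$. The five iterations will alternate between case (iii) of $\tau$ (where $u(0)=1$, $k=2$) and case (i) rescalings (needed because each case-(iii) step here produces a numerator with constant term $(1+t)\ne 1$, which must be absorbed before $\tau$ can be applied again). Concretely, I expect:
\begin{itemize}
\item $F_0 \to F_1$ by case (iii) with $u_L = 1-(1+t)x$, $u_H = -t$, whose numerator simplifies via $1 + t(1-(1+t)x) = (1+t)(1-tx)$ to give $F_1 = (1+t)(1-tx)/\bigl((1-x)(1-tx) - x^2 F_1\bigr)$;
\item $F_1 \to F_2$ by case (i), rescaling by $1/(1+t)$;
\item $F_2 \to F_3$ by case (iii) with $u_L = 1-x$, $u_H = 0$;
\item $F_3 \to F_4$ by case (i), again rescaling by $1/(1+t)$;
\item $F_4 \to F_5$ by case (iii) with $u_L = 1-(1+t)x$, $u_H = t$, whose numerator collapses to $1$ by the key identity $(1+t)(1-tx) - t(1-(1+t)x) = 1$, so that the defining equation of $F_5$ coincides with that of $F_0$.
\end{itemize}

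Each case-(iii) step contributes $H_{n-1}(F_{i+1}) = H_n(F_i)$ (no sign, since $\binom{d+1}{2} = \binom{1}{2} = 0$), while each case-(i) rescaling by $1/(1+t)$ contributes $H_n(F_{i+1}) = (1+t)^{-n} H_n(F_i)$. Chaining the five relations yields
\[
H_n(F) \;=\; (1+t)^{(n-1) + (n-2)}\, H_{n-3}(F) \;=\; (1+t)^{2n-3}\, H_{n-3}(F).
\]
A direct expansion of the functional equation gives $a_0 = 1$, $a_1 = 1+t$, $a_2 = (1+t)(2+t)$, hence $H_0 = H_1 = 1$ and $H_2 = 1+t$, matching the three claimed formulas at $n=0$. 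Induction on $n$ then closes the argument, using the easily checked exponent identities $3n^2 - 3(n-1)^2 = 2(3n)-3$, $(3n^2+2n) - (3(n-1)^2+2(n-1)) = 2(3n+1)-3$, and $(3n^2+4n+1) - (3(n-1)^2+4(n-1)+1) = 2(3n+2)-3$.

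The main obstacle is the bookkeeping of the five $\tau$-iterations: tracking the rational forms of each $F_i$, renormalizing after each case-(i) rescaling so that case (iii) may be reapplied, and in particular verifying the cancellation $(1+t)(1-tx) - t(1-(1+t)x) = 1$ that makes $F_5 = F_0$. Everything else is a routine chain of applications of Proposition~\ref{xinu(0,i,ii)}.
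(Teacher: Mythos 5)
Your proposal is correct and follows essentially the same route as the paper: the same five applications of $\tau$ giving the periodic chain $F_0\to F_1\to\cdots\to F_5=F_0$ (two case-(iii) steps with $d=0$ flanking, and interleaved with, two case-(i) rescalings by $(1+t)^{-1}$), the same recursion $H_n(F)=(1+t)^{2n-3}H_{n-3}(F)$, and the same initial values $H_0=H_1=1$, $H_2=1+t$. One slip to fix in the write-up: your opening display $F(x)=1/\bigl((1-x)(1-tx)-x^2F(x)\bigr)$ has the wrong sign on $tx^2$ --- the defining denominator is $1-(1+t)x-tx^2$, which does not factor, and the identity $(1-x)(1-tx)=1-(1+t)x+tx^2$ correctly enters only in the denominator of $F_1$ after the first $\tau$-step; since you in fact take $u_0(x)=1-(1+t)x-tx^2$ with $u_L=1-(1+t)x$, $u_H=-t$, the rest of your computation is unaffected.
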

\begin{proof}
Let's denote $F_0(x)=F(x)$. Then the functional equation can be rewritten as
\[F_0(x)=\frac{1}{1-(1+t)x-tx^2-x^2F_0(x)}.\]
We will repeatedly apply $\tau$ in Proposition \ref{xinu(0,i,ii)} to obtain a recursion. We will carry out the details in this computation.
Note that all the $u(x)$ appears are in fact polynomials, and hence the decompositions $u(x)=u_L(x)+x^{d+2}u_H(x)$ are simple.

Apply Proposition \ref{xinu(0,i,ii)}  to obtain $F_1(x)=\tau(F_0(x))$, where $u(0)=1, d=0$. We have
\[H_n(F_0)=H_{n-1}(F_1),\ \  \ F_1(x)=\frac{(1+t)(tx-1)}{(tx-1)(1-x)+x^2F_1(x)}.\]

Apply Proposition \ref{xinu(0,i,ii)} to obtain $F_2=\tau(F_1)$, where $u(0)^{-1}=t+1,\ d=0$. We have
\[H_{n-1}(F_1)=(1+t)^{n-1}H_{n-1}(F_2),\ F_2(x)=\frac{tx-1}{(tx-1)(1-x)+(1+t)x^2F_2(x)}.\]

Apply Proposition \ref{xinu(0,i,ii)} to obtain $F_3=\tau(F_2)$, where $u(0)=1, d=0$.  We have
\[H_{n-1}(F_2)=H_{n-2}(F_3),\ \  \ F_3(x)=\frac{1+t}{(tx-1)(-1+x+x^2F_3(x))}.\]

Apply Proposition \ref{xinu(0,i,ii)} to obtain $F_4=\tau(F_3)$, where $u(0)^{-1}=t+1,\ d=0$. We have
\[H_{n-2}(F_3)=(1+t)^{n-2}H_{n-2}(F_4),\ \ F_4(x)=\frac{1}{(tx-1)(-1+x+x^2(1+t)F_4(x))}.\]

Apply Proposition \ref{xinu(0,i,ii)} to obtain $F_5=\tau(F_4)$, where $u(0)=1, d=0$.  We have
\[H_{n-2}(F_4)=H_{n-3}(F_5),\ \  \ F_5(x)=\frac{1}{1-(1+t)x-tx^2-x^2F_5(x)}.\]
Now we see that $F_5(x)=F_0(x).$ Summarizing the above results gives the recursion
$H_n(F_0)=(1+t)^{2n-3}H_{n-3}(F_0)$. Together with the initial condition $H_0(F_0)=1,\ H_1(F_0)=1,\ H_2(F_0(x))=1+t,$
we conclude that, for example,
 \[H_{3n}(F_0)=(1+t)^{6n-3}H_{3(n-1)}(F_0)=\cdots =(1+t)^{2(3n+3(n-1)+\cdots+6+3)-3n}H_0(F_0)=(1+t)^{3n^2}.\]
The cases for $H_{3n+1}(F_0)$ and $H_{3n+2}(F_0)$ can be computed similarly.
\end{proof}

Our second example  has step set $\{ U=(1,1), D=(1,-1), H_2=(2,0), H_3=(3,0)\}$, where the $H_2$ step is weighted by $t$, the $H_3$ step is weighted by $t+1$, and all other steps is weighted by $1$.
Then the generating function $F(x)$ of such lattice paths satisfies the following quadratic functional equation:
\[ F(x)=1+tx^2F(x)+(t+1)x^3F(x)+ x^2F(x)^2.\]
\begin{thm}
Let $F(x)$ be as above with $t$ an indeterminate.  Then we have
\begin{align*}
   & H_{7n}(F(x))=(-1)^n(1+t)^{14n^2},\\
   & H_{7n+1}(F(x))=(-1)^n(1+t)^{14n^2+4n}, \\
   & H_{7n+2}(F(x))=(-1)^n(1+t)^{14n^2+8n+1},\\
   & H_{7n+3}(F(x))=H_{7n+4}(F(x))=0,\\
   & H_{7n+5}(F(x))=(-1)^{n+1}(1+t)^{14n^2+20n+7},\\
   &H_{7n+6}(F(x))=(-1)^{n+1}(1+t)^{14n^2+24n+10}.
\end{align*}
\end{thm}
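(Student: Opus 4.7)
The plan is to mirror the strategy of the previous theorem by applying Sulanke--Xin's transformation $\tau$ (Proposition \ref{xinu(0,i,ii)}) repeatedly to show that the continued fraction for $F_0(x) := F(x)$ is periodic of period $7$, meaning $F_7(x) = F_0(x)$. First, I would rewrite the functional equation in fractional form as
$$F_0(x) = \frac{1}{1 - tx^2 - (t+1)x^3 - x^2 F_0(x)},$$
placing us in the framework of \eqref{xinF(x)} with $d=0$, $k=2$, $u_0(x) = 1 - tx^2 - (t+1)x^3$ (so $u_0(0)=1$), and $v_0(x) = -1$. The first step therefore falls into case (iii) of Proposition \ref{xinu(0,i,ii)}.

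Next I would iterate: compute $F_1 = \tau(F_0)$, $F_2 = \tau(F_1)$, and so on, at each step checking whether $u_i(0)=1$ (case (ii) or (iii), with index shift $d_i+1$ and sign $(-1)^{\binom{d_i+1}{2}}$) or $u_i(0)\neq 1$ (case (i), which scales by $u_i(0)^n$ without shifting the index). Throughout I would track three things: (a) the accumulated index shift $\sum (d_i+1)$, which should equal $7$; (b) the accumulated sign $\prod(-1)^{\binom{d_i+1}{2}}$, which should equal $-1$; and (c) the product of scaling factors $\prod u_i(0)^{n_i}$ arising from case-(i) steps, which should combine into $(1+t)^{4m-14}$ when running from index $m$ down to index $m-7$. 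Having verified $F_7 = F_0$, I obtain the single-period recursion
$$H_m(F_0) = -(1+t)^{4m-14}\, H_{m-7}(F_0), \qquad m\geq 7.$$

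The vanishing values $H_{7n+3} = H_{7n+4} = 0$ must come from some intermediate stage at which $d_i \geq 1$: in that case the rule $H_{n-d_i-1}(\tau F_i) = (-1)^{\binom{d_i+1}{2}} H_n(F_i)$ only feeds in indices $n \geq d_i+1$, so two consecutive Hankel determinants of $F_i$ are forced to be zero, and these zeros propagate back to $F_0$ at the two residues $3$ and $4$ modulo $7$. The seven initial values $H_0=1$, $H_1=1$, $H_2=1+t$, $H_3=H_4=0$, $H_5=-(1+t)^7$, $H_6=-(1+t)^{10}$ would be verified directly from the series expansion of $F(x)$, after which the recursion above determines each of the six residue classes by an elementary induction identical to the one in the previous theorem.

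The main obstacle is purely bookkeeping: the rational functions $u_i(x), v_i(x)$ grow in complexity with each application of $\tau$, and correctly decomposing $u_i(x) = u_{i,L}(x) + x^{d_i+2} u_{i,H}(x)$ and simplifying the resulting expressions for $\tau(F_i)$ is error-prone by hand. As remarked after Proposition \ref{xinu(0,i,ii)}, this is easily implemented in Maple, so the calculation is mechanical; the only conceptual checkpoints are the closure identity $F_7 = F_0$ and the resulting multiplicative constant $-(1+t)^{4m-14}$, both of which must match the recursion extracted from the conjectured closed forms.
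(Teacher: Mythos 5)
Your proposal is correct and follows the paper's proof essentially verbatim: rewrite $F$ in fractional form, iterate $\tau$ until the continued fraction closes up, extract the one-period recursion $H_n(F_0)=-(1+t)^{4n-14}H_{n-7}(F_0)$, and finish with the directly verified initial values $H_0=H_1=1$, $H_2=1+t$, $H_3=H_4=0$, $H_5=-(1+t)^7$, $H_6=-(1+t)^{10}$. One small correction your own bookkeeping would catch: the cycle closes at $F_5=F_0$ after five named transformations (one of which has $d=2$, shifting the index by $3$ and contributing the sign $(-1)^{\binom{3}{2}}=-1$, and also accounting for the two zero residues), so the period in transformations is $5$ while $7$ is the accumulated index shift $\sum(d_i+1)$, not the number of steps until $F$ recurs.
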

\begin{proof}
Let $F_0(x)=F(x)$. Then $F_0(x)$ is determined by
\[F_0(x)=\frac{1}{1-tx^2-(t+1)x^3-x^2F_0(x)}.\]

Repeat application of Proposition \ref{xinu(0,i,ii)} gives
\begin{align*}
   &H_n(F_0)=H_{n-1}(F_1),& F_1(x)=\frac{(1+x)(t+1)}{(x+1)(1-x+(1+t)x^2)-x^2F_1(x)}.\\
   & H_{n-1}(F_1)=(1+t)^{n-1}H_{n-2}(F_2), & F_2(x)=\frac{x^2(1+t)}{(1+x)(1-x-(1+t)x^2-x^2F_2(x))}.\\
   &H_{n-2}(F_2)=(-1)^{\binom{3}{2}}(1+t)^{n-2}H_{n-5}(F_3), &F_3(x)=\frac{(x+1)(1+t)}{(1+x)(1-x-(1+t)x^2)-x^4F_3(x)}.\\
   & H_{n-5}(F_3)=(1+t)^{n-5}H_{n-6}(F_4), & F_4(x)=\frac{(1+t)}{(1+x)(1-x+(1+t)x^2-x^2F_4(x))}.\\
   &H_{n-6}(F_4)=(1+t)^{n-6}H_{n-7}(F_5), & F_5(x)=\frac{1}{1-tx^2-(1+t)x^3-x^2F_5(x).}\qquad \
\end{align*}

Now we see that $F_5(x)=F_0(x)$. Then we obtain $H_n(F_0)=-(1+t)^{4n-14}H_{n-7}(F_0)$.

The initial conditions are $H_0(F_0)=H_1(F_0)=1$, $H_2(F_0)=1+t,$ $H_3(F_0)=H_4(F_0)=0$, $H_5(F_0)=-(1+t)^7$, $H_6(F_0)=-(1+t)^{10}$.  Therefore
\[H_{7n}(F_0)=-(1+t)^{28n-14}H_{7(n-1)}(F_0),\]
Recursively, we get\[H_{7n}(F_0)=(-1)^n(1+t)^{4(7n+7(n-1)+\cdots+14+7)-14n}H_0(F_0)=(-1)^n(1+t)^{14n^2}.\]
The other six cases follow similarly.
\end{proof}

\section{Four classes of Hankel determinants \label{sec:four-classes}}

In our setup, it is convenient to use a functional equation to define a generating function. In this section,
we consider generated function $F(x)$ determined by
\begin{align}
  F(x)=\dfrac{x^m}{1-ax^q-bx^\ell F(x)}. \label{e-F-mql}
\end{align}
The step set of the corresponding paths is $\{(1,1),(q,0), (\ell-1,-1)\}$. Note that $m,q,\ell$ are integer parameters.

Computer experiment shows that the Hankel determinants $H_n(F(x))$ do not always have nice formulas. However, we find nice formulas
for four classes of $m,q,\ell$, and it seems that there is no more nice formulas.
As far as we know, these results have not appeared in the literature.

\begin{thm}
Let $F(x)$ be determined by \eqref{e-F-mql}. If $m,\ \ell,\ q$ are non-negative integers satisfying $m+1\geq q>0$ and $\ell\geq q+1$, then we have
\begin{align*}
 H_{(n+1)(m+\ell)}(F)&=(-1)^{(n+1)\left[\binom{m+1}{2}+\binom{\ell-1}{2}\right] }b^{(n+1)(n\ell+nm+\ell-1)}, \\
 H_{n(m+\ell)+m+1}(F)&=(-1)^{(n+1)\binom{m+1}{2}+n\binom{\ell-1}{2} }b^{n^2(\ell+m)+n(m+1)},
\end{align*}
and $H_n(F(x))=0$ for all other situations.
\end{thm}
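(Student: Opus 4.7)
The plan is to iterate the Sulanke--Xin transformation $\tau$ from Proposition \ref{xinu(0,i,ii)} exactly four times and show that the cycle closes on $F$ itself: this is a genuine (non-shifted) periodic continued fraction, and it collapses to a single-step recursion on $H_n(F)$ that telescopes to the claimed formulas.

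To set up the cycle I match \eqref{e-F-mql} with \eqref{xinF(x)} by taking $d=m$, $u(x)=1-ax^q$, $v(x)=-b$, $k=\ell$. The hypothesis $q\le m+1$ forces $u_L=u$ and $u_H=0$, while $u(0)=1$ together with $k=\ell\ge q+1\ge 2$ places us in case (iii), producing $G=\tau(F)$ with
\[G(x)=\frac{b\,x^{\ell-2}}{1-ax^q-x^{m+2}G(x)},\qquad H_{n-m-1}(G)=(-1)^{\binom{m+1}{2}}H_n(F).\]
Rewriting $G$ in canonical form forces $u(0)=1/b$, so case (i) applies and $F_1:=\tau(G)=G/b$ satisfies
\[F_1(x)=\frac{x^{\ell-2}}{1-ax^q-b\,x^{m+2}F_1(x)},\qquad H_n(F_1)=b^{-n}H_n(G).\]
Crucially, this is \eqref{e-F-mql} again with the involutive parameter swap $(m,q,\ell)\mapsto(\ell-2,q,m+2)$, which preserves the hypotheses (the roles of $m+1\ge q$ and $\ell\ge q+1$ interchange). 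Applying the same two-step procedure to $F_1$ therefore yields $F_2$ satisfying \eqref{e-F-mql} with the original parameters, and uniqueness of the power-series solution forces $F_2=F$. Composing the four determinant identities produces the desired one-step recursion
\[H_n(F)=(-1)^{\binom{m+1}{2}+\binom{\ell-1}{2}}\,b^{\,2n-2m-\ell-1}\,H_{n-m-\ell}(F),\qquad n\ge m+\ell.\]

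It remains to pin down $H_n(F)$ for $0\le n\le m+\ell-1$. Since $F(x)=x^m+O(x^{m+1})$, the first row of the Hankel matrix is identically zero whenever $1\le n\le m$, so $H_n(F)=0$ there; at $n=m+1$ the matrix is anti-triangular with $1$'s on the antidiagonal, contributing $H_{m+1}(F)=(-1)^{\binom{m+1}{2}}$. For the intermediate range $m+2\le n\le m+\ell-1$ I would use the partial identity $H_n(F)=(-1)^{\binom{m+1}{2}}b^{n-m-1}H_{n-m-1}(F_1)$ already produced inside the cycle: the same anti-triangular argument applied to $F_1(x)=x^{\ell-2}+O(x^{\ell-1})$ forces $H_k(F_1)=0$ for $1\le k\le\ell-2$, hence these intermediate $H_n(F)$ also vanish. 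Finally, telescoping the recursion along the two nontrivial residue classes $0$ and $m+1$ modulo $m+\ell$, starting from $H_0(F)=1$ and $H_{m+1}(F)=(-1)^{\binom{m+1}{2}}$, evaluates an arithmetic sum $\sum k$ in the exponent of $b$ and yields the two nonzero families; on the other residue classes the zero simply propagates.

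The main obstacle is bookkeeping rather than ideas. An off-by-one in either of the case-(iii) index shifts (contributing the signs $\binom{m+1}{2}$ and $\binom{\ell-1}{2}$) or in the accumulation of the two case-(i) scalings $b^{-n}$ would corrupt the final exponent $2n-2m-\ell-1$ or the sign. I would sanity-check the cycle symbolically on a small instance such as $m=q=1,\ \ell=2$ before trusting the telescoped closed form, and also verify that the involution $(m,q,\ell)\mapsto(\ell-2,q,m+2)$ genuinely preserves the inequalities at the boundary case $\ell=q+1$, where $\ell-2$ can be as small as $q-1$ but case (iii) is still available because $k=m+2\ge 2$.
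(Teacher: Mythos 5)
Your proposal is correct and is essentially the paper's own argument: both iterate the transformation $\tau$ of Proposition \ref{xinu(0,i,ii)} through the same cycle of functional equations, obtain the one-step recursion $H_n(F)=(-1)^{\binom{m+1}{2}+\binom{\ell-1}{2}}b^{2n-2m-\ell-1}H_{n-m-\ell}(F)$ (the paper's \eqref{eq411} in equivalent form), and telescope from the same initial block of values. The only difference is bookkeeping: you normalize with case (i) after each case-(iii) step so the period closes on $F$ itself with initial values $H_0(F)=1$, $H_{m+1}(F)=(-1)^{\binom{m+1}{2}}$, whereas the paper keeps the factor $b$ inside and closes the period on $F_1$ (via $F_3=F_1$) with initial values stated for $F_1$ --- the two are equivalent via $H_n(F)=(-1)^{\binom{m+1}{2}}b^{n-m-1}H_{n-m-1}(F_1)$, and your telescoped exponents and signs check out against the stated formulas.
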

\begin{proof}
Apply Proposition \ref{xinu(0,i,ii)} to $F_0(x):=F(x)$. We obtain
\[H_n(F)=(-1)^{\binom{m+1}{2}}H_{n-m-1}(F_1),\ \qquad F_1(x)=\frac{bx^{\ell-2}}{1-ax^q-x^{m+2}F_1(x)},\]
Repeated application of Proposition \ref{xinu(0,i,ii)} gives
\begin{align*}
  H_{n-m-1}(F_1)&=(-1)^{\binom{\ell-1}{2}}b^{n-m-1}H_{n-m-\ell}(F_2), &F_2(x)=\frac{bx^m}{1-ax^q-x^\ell F_2(x)}.\quad \ \\
  H_{n-m-\ell}(F_2)&=(-1)^{\binom{m+1}{2}}b^{n-m-\ell}H_{n-2m-\ell-1}(F_3),  &F_3(x)=\frac{bx^{\ell-2}}{1-ax^q-x^{m+2}F_3(x)}.
\end{align*}
Note that in application of each $\tau$, we have $u_L(x)=1-ax^q$ and $u_H(x)=0$, due to the conditions on $m,\ell,$ and $q$. This allows us
to do the transformations for symbolic $m,\ell,q$.

Now we see that $F_3(x)=F_1(x),$ and the Hankel determinants of $F_1(x)$ have a period of $m+\ell$. Therefore we obtain
\[H_{n-m-1}(F_1)=(-1)^{\binom{m+1}{2}+\binom{\ell-1}{2}}b^{2n-2m-\ell-1}H_{n-2m-\ell-1}(F_1).\]
\begin{align}\label{eq411}
  H_n(F)&=(-1)^{\binom{m+1}{2}}H_{n-m-1}(F_1)=(-1)^{\binom{\ell-1}{2}}b^{2n-2m-\ell-1}H_{n-2m-\ell-1}(F_1).
\end{align}
Let $n-m-1=(\ell+m)p+i$ be a non-negative integer. We then deduce that
\begin{equation*}
 H_{n-m-1}(F_1)=(-1)^{p[\binom{m+1}{2}+\binom{\ell-1}{2}]}b^{p^2(\ell+m)+p(m+2i+1)}H_i(F_1).
\end{equation*}
The initial values are
$$H_0(F_1)=1,\  H_{\ell-1}(F_1)=(-1)^{\binom{\ell-1}{2}}\ b^{\ell-1}, \text{ and } H_i(F_1)=0 \text{ for } 1\leq i\leq \ell-2\text{ and }\ell\leq i<\ell+m.$$
The theorem then follows by the above initial values and \eqref{eq411}.
\end{proof}
\begin{thm}
For $m+\ell=q$, $\ell\geq 2$, $m$, $\ell$, $n$, $q$ are all non-negative integers, we have
\begin{align*}
  H_{(\ell+m)n}(F)&=(-1)^{\binom{m+1}{2}n+\binom{\ell-1}{2}n}(a+b)^{\frac{(\ell+m)(n-1)n}{2}+(\ell-1)n}b^{\frac{(\ell+m)(n-1)n}{2}}, \\
  H_{(\ell+m)n+m+1}(F)&=(-1)^{\binom{m+1}{2}(n+1)+\binom{\ell-1}{2}n}(a+b)^{\frac{(\ell+m)(n+1)n}{2}}b^{\frac{(\ell+m)(n+1)n}{2}-(\ell-1)n},
\end{align*}
and $H_n(F)=0$ for all other situations.
\end{thm}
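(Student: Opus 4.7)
The plan is to iterate the Sulanke--Xin transformation $\tau$ of Proposition \ref{xinu(0,i,ii)} starting from $F_0 = F$ and exhibit a short cycle that returns to a scalar multiple of $F_0$, producing a closed recursion for $H_n(F)$. Because $q = m+\ell \geq m+2$, the decomposition of $u(x) = 1-ax^q$ with respect to $d=m$ gives $u_L(x)=1$ and the nontrivial high part $u_H(x) = -ax^{\ell-2}$, so case (iii) applies (in contrast with the previous theorem where $u_H = 0$).

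Applying $\tau$ to $F_0$ should yield
\[
F_1(x) = \frac{(a+b)\, x^{\ell-2}}{1 + ax^q - x^{m+2} F_1(x)}, \qquad H_n(F_0) = (-1)^{\binom{m+1}{2}} H_{n-m-1}(F_1),
\]
where the combination $a+b$ emerges from $-x^{k-2}v(x) - u_L(x) u_H(x) = (a+b)x^{\ell-2}$. Since the numerator now carries a constant factor $a+b$, I next apply case (i) to obtain $G_1 = F_1/(a+b)$ in canonical form, satisfying $G_1(x) = x^{\ell-2}/(1+ax^q-(a+b)x^{m+2}G_1(x))$ with $H_n(F_1) = (a+b)^n H_n(G_1)$. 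A second application of case (iii) to $G_1$, now with $d = \ell-2$ and the new decomposition $u_L=1$, $u_H = ax^m$ (since $q = m+\ell \ge \ell$), produces the key cancellation $-x^m\cdot(-(a+b)) - 1\cdot ax^m = bx^m$, which identifies
\[
G_2(x) = \frac{bx^m}{1 - ax^q - x^\ell G_2(x)} = b\, F_0(x),
\]
so that $H_n(G_2) = b^n H_n(F_0)$, together with the shift $H_n(G_1) = (-1)^{\binom{\ell-1}{2}} H_{n-\ell+1}(G_2)$. Chaining the three relations yields the recursion
\[
H_n(F) = (-1)^{\binom{m+1}{2}+\binom{\ell-1}{2}}\, (a+b)^{n-m-1}\, b^{n-(m+\ell)}\, H_{n-(m+\ell)}(F).
\]

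For the initial values, a direct series expansion shows $F(x) = \sum_{j\ge 0} c_j\, x^{m+j(m+\ell)}$ with $c_0 = 1$; hence the Hankel matrix has nonzero entries only on antidiagonals $i+j \equiv m \pmod{m+\ell}$, yielding $H_n(F) = 0$ for $1\le n\le m$ and $m+2\le n\le m+\ell-1$, while the single antidiagonal of $1$'s at $n = m+1$ gives $H_{m+1}(F) = (-1)^{\binom{m+1}{2}}$. Iterating the recursion $n$ times from the seeds $H_0(F) = 1$ and $H_{m+1}(F) = (-1)^{\binom{m+1}{2}}$, and evaluating the two telescoping sums $\sum_{j=1}^n\bigl(j(m+\ell)-m-1\bigr) = \tfrac{(m+\ell)n(n-1)}{2}+(\ell-1)n$ and $\sum_{j=1}^n\bigl(j(m+\ell)-(m+\ell)\bigr)=\tfrac{(m+\ell)n(n-1)}{2}$, produces the two claimed closed forms. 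The main obstacle is the careful bookkeeping across the three applications of $\tau$: the value of $d$ alternates between $m$ and $\ell-2$, so the decomposition $u = u_L + x^{d+2} u_H$ must be recomputed each time, and the normalization via case (i) must be correctly inserted to make the factor $(a+b)^{n-m-1}$ appear at the right power.
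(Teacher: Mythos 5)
Your proposal is correct and follows essentially the same route as the paper: both iterate the Sulanke--Xin transformation to exhibit a period-$(m+\ell)$ continued fraction giving the recursion $H_n(F)=(-1)^{\binom{m+1}{2}+\binom{\ell-1}{2}}(a+b)^{n-m-1}b^{n-(m+\ell)}H_{n-(m+\ell)}(F)$ (your decompositions $u_L=1$, $u_H=-ax^{\ell-2}$ and $u_L=1$, $u_H=ax^m$, and the cancellation producing $bx^m$, all check out), and then finish with the same seeds. The only cosmetic differences are that you close the cycle at $F$ itself by recognizing $G_2=bF$ from uniqueness of the functional equation and read the initial values off the support of $F$, whereas the paper closes the cycle at $F_1$ (your $G_2$ appears there as $F_2$, with $F_3=F_1$) and computes initial values for $F_1$, folding your explicit case-(i) normalizations into the factors $(a+b)^n$ and $b^n$ of its combined steps.
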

\begin{proof}
Apply Proposition \ref{xinu(0,i,ii)} to $F_0(x):=F(x)$, we obtain
$$H_n(F_0)=(-1)^{\binom{m+1}{2}}H_{n-(m+1)}(F_1),\ \qquad F_1(x)=\frac{(a+b)x^{\ell-2}}{1+ax^q-x^{m+2}F_1(x)}.$$
Repeated application of Proposition \ref{xinu(0,i,ii)} gives
\begin{align*}
  H_n(F_1)&=(-1)^{\binom{\ell-1}{2}}(a+b)^n H_{n-(\ell-1)}(F_2),& F_2(x)=\frac{bx^m}{1-ax^q-x^\ell F_2(x)}. \quad \ \\
 H_n(F_2)&=(-1)^{\binom{m+1}{2}}b^nH_{n-(m+1)}(F_1), & F_3(x)=\frac{(a+b)x^{\ell-2}}{1+ax^q-x^{m+2}F_3(x)}.
\end{align*}
Now we see that $F_3(x)=F_1(x)$, and the Hankel determinants of $F_1(x)$ have a period of $\ell+m$. Therefore we obtain $H_n(F_1)=(-1)^{\binom{\ell-1}{2}+\binom{m+1}{2}}(a+b)^nb^{n-\ell+1}H_{n-\ell-m}(F_1).$
 Using this recursion together with some initial conditions we obtain
\begin{align*}
  H_{(\ell+m)n}(F_1) &=(-1)^{\binom{\ell-1}{2}n+\binom{m+1}{2}n}(a+b)^{\frac{(\ell+m)(n+1)n}{2}}b^{\frac{(\ell+m)(n+1)n}{2}-(\ell-1)n}, \\
  H_{(\ell+m)n+\ell-1}(F_1)&=(-1)^{\binom{m+1}{2}n+\binom{\ell-1}{2}(n+1)}(a+b)^{\frac{(\ell+m)(n+1)n}{2}+(\ell-1)(n+1)}b^{\frac{(\ell+m)(n+1)n}{2}},\\
  H_n(F_1)&=0 \text{ for all other situations.}
\end{align*}
Then we can get $H_n(F)$.
\end{proof}
\begin{thm}\label{t-for-Catalan}
Let $F(x)$ be determined by \eqref{e-F-mql}. If $m,\ \ell, \ q$ are non-negative integers satisfying
$i=q-m\geq2$ and $ \ell\geq q+2i-1$, then we have
\begin{align*}
H_{n(\ell+m)+m+i}(F)&=(-1)^{(n+1)\binom{m+1}{2}+n\binom{\ell-2i+1}{2}+\binom{i-1}{2}-n(i+3)}b^{n^2(\ell+m)+n(2i+m-1)}[(n+1)a]^{i-1},\\
  H_{(n+1)(\ell+m)-i+1}(F)&=  (-1)^{(n+1)[\binom{\ell-2i+1}{2}+\binom{m+1}{2}]+\binom{i-1}{2}-n(i+3)}b^{(n+1)(n\ell+nm+\ell-2i+1)}
    [(n+1)a]^{i-1},\\
H_{(n+1)(\ell+m)}(F)&=(-1)^{(n+1)[\binom{\ell-2i+1}{2}+\binom{m+1}{2}]-n(i+3)+i-1}b^{(n+1)(n\ell+nm+\ell-1)},\\
H_{n(\ell+m)+m+1}(F)&=(-1)^{n\binom{\ell-2i+1}{2}+(n+1)\binom{m+1}{2}-n(i+3)}b^{n(n+1)(\ell+m)-n(\ell-1)},
\end{align*}
and $H_n(F)=0$ for all other situations.
\end{thm}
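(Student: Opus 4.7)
The plan is to follow the same template used for the previous two theorems in this section: iterate Sulanke--Xin's quadratic transformation $\tau$ from Proposition \ref{xinu(0,i,ii)} starting from $F_0(x) := F(x)$, track the accumulated prefactors relating $H_n(F_0)$ to shifted Hankel determinants of each $F_k$, and detect when the sequence of generating functions closes up into a periodic (or shifted periodic) continued fraction. The hypotheses $i = q - m \geq 2$ and $\ell \geq q + 2i - 1$ are calibrated so that at every stage the decomposition $u(x) = u_L(x) + x^{d+2} u_H(x)$ takes a simple form in the symbolic parameters $m, q, \ell, a, b$, and Case (iii) of Proposition \ref{xinu(0,i,ii)} applies, occasionally interleaved with a rescaling from Case (i) to restore $u(0) = 1$.

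First I compute $F_1 = \tau(F_0)$. With $u_0(x) = 1 - a x^q$, $v_0(x) = -b$, $d_0 = m$, $k_0 = \ell$, the hypothesis $q = m + i \geq m + 2$ gives the decomposition $u_L = 1$ and $u_H = -a x^{i-2}$; Case (iii) then produces
\begin{equation*}
F_1(x) = \frac{x^{i-2}(a + b x^{\ell-i})}{1 + a x^q - x^{m+2} F_1(x)},\qquad H_{n-m-1}(F_1) = (-1)^{\binom{m+1}{2}} H_n(F_0).
\end{equation*}
To renormalise $F_1$ into the canonical form \eqref{xinF(x)} I divide numerator and denominator by the unit $a + b x^{\ell-i}$, which produces a new $u(0) = 1/a \neq 1$; a single application of Case (i) then restores $u(0) = 1$ at the cost of a multiplicative factor $a^{-n}$ on the Hankel side, after which Case (iii) applies again. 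The inequality $\ell \geq q + 2i - 1$ is exactly what is needed to ensure that the updated $u_L$ remains a polynomial of degree at most the new $d + 1$ at every subsequent step.

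Continuing this symbolic iteration yields a sequence $F_0, F_1, F_2, \ldots$ in which, after a bounded number of steps, some $F_k$ agrees with $F_0$ up to a constant-rules rescaling. Concatenating the per-step identities produces a clean recursion of the form
\begin{equation*}
H_n(F_0) \;=\; \varepsilon\, a^{\alpha(n)} b^{\beta(n)} \, H_{n-(\ell+m)}(F_0),
\end{equation*}
with explicit sign $\varepsilon = \pm 1$ and linear-in-$n$ exponents $\alpha(n), \beta(n)$. The vanishing pattern of $H_n(F_0)$ is then forced by the residue of $n$ modulo $\ell + m$: the four nonzero classes $n \equiv 0,\, m+1,\, m+i,\, \ell + m - i + 1 \pmod{\ell + m}$ correspond precisely to the four cases in the theorem. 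Solving the recursion in each class with the appropriate base values yields the four closed formulas.

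The main obstacle will be the symbolic bookkeeping: unlike the Catalan computation in Section \ref{sec:catalan}, which carries only a single shift parameter, here one must simultaneously track three integer parameters $m, q, \ell$ and two formal parameters $a, b$ through an iteration whose alternation between Case (i) and Case (iii) produces both multiplicative prefactors $u(0)^n$, contributing the powers of $a$ and the factor $[(n+1)a]^{i-1}$, and sign/power factors $(-1)^{\binom{*}{2}} b^{*}$, contributing the signs and powers of $b$. Verifying that the hypothesis $\ell \geq q + 2i - 1$ is preserved along the full period, and that the four base values fall exactly at the positions $0,\, m+1,\, m+i,\, \ell + m - i + 1$, will require explicit small-case computation to guess the pattern before the recursion confirms the general formulas.
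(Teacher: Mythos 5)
Your opening move matches the paper's: your $F_1$ is exactly the paper's first transform (their family $F_{4p+1}$ at $p=0$), with the same relation $H_{n-m-1}(F_1)=(-1)^{\binom{m+1}{2}}H_n(F_0)$, and you even identify the correct four nonzero residue classes $0,\ m+1,\ m+i,\ \ell+m-i+1 \pmod{\ell+m}$. But the load-bearing claim of your proposal --- that after boundedly many steps some $F_k$ agrees with $F_0$ up to a constant-rules rescaling, yielding a single-function recursion $H_n(F_0)=\varepsilon\, a^{\alpha(n)}b^{\beta(n)}H_{n-(\ell+m)}(F_0)$ with linear exponents --- is false, and its failure is precisely the phenomenon this theorem is meant to exhibit. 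The iteration here produces a genuinely \emph{shifted} periodic continued fraction: the paper must guess the $p$-dependent family
\[
F_{4p+1}=\frac{x^{i-2}\left[p(p+1)a^2x^q+bx^{\ell+m-q}+(p+1)a\right]}{1+(2p+1)ax^q-x^{q-(i-2)}F_{4p+1}},
\]
and four applications of $\tau$ carry $F_{4p+1}$ to $F_{4(p+1)+1}$, a structurally different function (the numerator term $p(p+1)a^2x^q$ vanishes at $p=0$ but not for $p\geq 1$, so no rescaling by a constant, nor the product/composition rules, identifies distinct members of the family). The orbit never returns to $F_0$.

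This is not merely a bookkeeping issue: your proposed recursion is incompatible with the theorem's own formulas. Iterating a recursion with sign $\pm 1$ and exponents linear in $n$ expresses $H_n(F)$ as a pure monomial $\pm a^{\mathrm{quadratic}(n)}b^{\mathrm{quadratic}(n)}$ times one of finitely many fixed initial values, whereas two of the four cases contain the factor $[(n+1)a]^{i-1}$, whose coefficient $(n+1)^{i-1}$ grows polynomially in $n$. In the paper's proof, this factor enters through the $p$-dependent initial values of the shifted family --- e.g.\ $H_{i-1}(F_{4p+1})=(-1)^{\binom{i-1}{2}}[(p+1)a]^{i-1}$ and $H_{\ell-i}(F_{4p+1})=(-1)^{\binom{i-1}{2}+\binom{\ell-2i+1}{2}}b^{\ell-2i+1}[(p+1)a]^{i-1}$ --- evaluated at shift level $p=n$ after descending through the recursion $H_{n-m-1}(F_{4p+1})=(-1)^{\binom{\ell-2i+1}{2}+\binom{m+1}{2}-(i+3)}b^{2(n-m-1)-(\ell-1)}H_{n-m-1-(\ell+m)}(F_{4(p+1)+1})$; note the per-period prefactor happens to be $p$-free because the powers of $(p+1)a$ accumulated over the four steps cancel exactly, so \emph{all} of the $n$-polynomial dependence lives in the initial data, a mechanism your single-function recursion cannot capture. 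To repair the proposal you would need to replace ``closes up on $F_0$'' by guessing the $p$-indexed family and running the recursion across it, which is what the paper does; your final remark about needing small-case computation to guess a pattern points in the right direction, but the pattern to be guessed is the shifted family itself, which your stated periodicity assumption forecloses.
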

\begin{proof}
We apply Proposition \ref{xinu(0,i,ii)} to $F_0(x):=F(x)$, and repeat the transformation $\tau$. The conditions on $m,\ell,q$ allow us
to compute symbolically and we will give detailed decomposition of the corresponding $u(x)$.

Computer experiment suggests us to define
\[F_{4p+1}=\frac{x^{i-2}[p(p+1)a^2x^q+bx^{\ell+m-q}+(p+1)a]}{1+(2p+1)ax^q-x^{q-(i-2)}F_{4p+1}}.\]
Then the results may be summarized as follows.
\[H_n(F_0)=(-1)^{\binom{m+1}{2}}H_{n-m-1}(F_1),\]
and for $p\ge 0$, by the conditions on $m,\ell,q$, the process is as follows.

Apply Proposition \ref{xinu(0,i,ii)} to obtain $F_{4p+2}=\tau (F_{4p+1})$.
Firstly, $d=i-2$. Next we need to decompose $u(x)$ with respect to $d$.
We expand $u(x)$ as a power series and focus on (by displaying) those terms  with small exponents:
$$u(x)=\frac{1+(2p+1)ax^q}{p(p+1)a^2x^q+bx^{\ell+m-q}+(p+1)a}
=\frac{1}{(p+1)a}+\frac{2p+1}{(p+1)}x^q-\frac{b}{(p+1)a}x^{\ell+m-q}+\cdots,$$
By the conditions on $m,\ell, q$, we have
$q=i+m>i-1=d+1$, and $\ell+m-q \geq 2i-1>i-1=d+1$. Thus
$u_L(x)=\displaystyle\frac{1}{(p+1)a}$ is simple, and
$ u_H(x)=\displaystyle\frac{(p+1)^2a^2x^m-bx^{\ell-2i}}
{(p(p+1)a^2x^q+bx^{\ell+m-q}+(p+1)a)(p+1)a}.$
Then by $u(0)^{-1}= (p+1)a$, we obtain
 \begin{align*}
  &H_{n-m-1}(F_{4p+1})=(-1)^{\binom{i-1}{2}}\left((p+1)a\right)^{n-m-1}H_{n-m-1-(i-1)}(F_{4p+2}),\\
  &F_{4p+2}=\frac{bx^{\ell-2i}}{(p+1)a-(p+1)a^2x^q+2bx^{\ell+m-q}-((p+1)a+p(p+1)a^2x^q+bx^{\ell+m-q})x^iF_{4p+2}}.
\end{align*}

Apply Proposition  \ref{xinu(0,i,ii)} to obtain $F_{4p+3}=\tau (F_{4p+2})$. This time $d=\ell-2i$ and $u(x)$ is indeed a polynomial: $$u(x)=\displaystyle\frac{(p+1)a-(p+1)a^2x^q+2bx^{\ell+m-q}}{b}.$$
Now $q\leq \ell -2i+1 =d+1$, and $\ell+m-q=\ell-i> d+1$. It then follows that $u_L(x)=\displaystyle\frac{(p+1)a-(p+1)a^2x^q}{b},$
$u_H(x)=2x^{i-2}.$
Then by $u(0)^{-1}=\dfrac{b}{(p+1)a}$, we obtain
\begin{align*}
&H_{n-m-1-(i-1)}(F_{4p+2})=(-1)^{\binom{\ell-2i+1}{2}}\left(\frac{b}{(p+1)a}\right)^{n-m-1-(i-1)}\!\!H_{n-m-1-(\ell-i)}(F_{4p+3}),\\
&F_{4p+3}=\frac{x^{i-2}b[(p+1)(p+2)a^2x^q+bx^{\ell+m-q}-a(p+1)]}{(p+1)a[(p+1)a-2bx^{\ell+m-q}-(p+1)a^2x^q-(p+1)ax^{\ell+m-q-(i-2)}F_{4p+3}]}.
\end{align*}

Apply Proposition  \ref{xinu(0,i,ii)} to obtain $F_{4p+4}=\tau (F_{4p+3})$. Clearly $d=i-2$. Now it is not simple to expand  $$u(x)=\displaystyle\frac{a(p+1)[(p+1)a-2bx^{\ell+m-q}-(p+1)a^2x^q]}{b[(p+1)(p+2)a^2x^q+bx^{\ell+m-q}-a(p+1)]},$$
as a power series and focus on small degree terms. We claim that
$u_L(x)=\displaystyle-\frac{(p+1)a}{b},$
$u_H(x)=\displaystyle\frac{a(p+1)[(p+1)^2a^2x^m-
bx^{\ell-2i}]}{b[(p+1)(p+2)a^2x^q+bx^{\ell+m-q}-a(p+1)]}.$ To see this, one verify that $u(x)=u_L(x)+x^{d+2} u_H(x)$ and check that $u_H(x)$ is a power series by the facts $q>0$, $\ell+m-q=\ell -i\geq q+i-1>0$, $m\geq 0$, $\ell-2i\geq q-1>0$. Finally, by $u(0)^{-1}=-\dfrac{b}{(p+1)a}$, we obtain
\begin{align*}
 & H_{n-m-1-(\ell-i)}(F_{4p+3})=(-1)^{\binom{i-1}{2}}\left(-\frac{b}{(p+1)a}\right)^{n-m-1-(\ell-i)}\!\!H_{n-m-1-(\ell-1)}(F_{4p+4}),\\
 & F_{4p+4}=\frac{-(p+1)^2a^2x^m}{(p+1)a[1-(2p+3)ax^q]+[(p+1)(p+2)a^2x^q+bx^{\ell+m-q}-a(p+1)]x^iF_{4p+4}}.
\end{align*}

Apply Proposition  \ref{xinu(0,i,ii)} to obtain $F_{4p+5}=\tau (F_{4p+4})$. Clearly $d=m$. Since\\
 $u(x)=\displaystyle-\frac{1-(2p+3)ax^q}{a(p+1)}$ is a polynomial
and $q>m+1=d+1$,
we have\\
 $u_L(x)=\displaystyle-\frac{1}{(p+1)a}, \ u_H(x)=\displaystyle\frac{(2p+3)x^{i-2}}{p+1}$. By $u(0)^{-1}=-(p+1)a$, we obtain
\begin{align*}
  H_{n-m-1-(\ell-1)}(F_{4p+4})&=(-1)^{\binom{m+1}{2}}(-(p+1)a)^{n-m-1-(\ell-1)}H_{n-m-1-(\ell+m)}(F_{4p+5}).
\end{align*}
By combining the above formulas we obtain
\[H_{n-m-1}(F_{4p+1})=(-1)^{\binom{\ell-2i+1}{2}+\binom{m+1}{2}-(i+3)}b^{2(n-m-1)-(\ell-1)}H_{n-m-1-(\ell+m)}(F_{4(p+1)+1}).\]
Let $n-m-1=p(\ell+m)+j$, where $0\leq j< \ell+m$. We then deduce that
\begin{align}
  H_{n-m-1}(F_1)&=(-1)^{p[\binom{\ell-2i+1}{2}+\binom{m+1}{2}-(i+3)]}b^{p(p+1)(\ell+m)+p(2j-\ell+1)}H_j(F_{4p+1}),\label{e-q-mpl-F1w}\\
  H_n(F)&=(-1)^{p[\binom{\ell-2i+1}{2}-(i+3)]+(p+1)\binom{m+1}{2}}b^{p(p+1)(\ell+m)+p(2j-\ell+1)}H_j(F_{4p+1}).\label{e-q-mpl-Fw}
\end{align}
The initial values are
\begin{align*}
H_{i-1}(F_{4p+1})&=(-1)^{\binom{i-1}{2}}[(p+1)a]^{i-1},\\
H_{\ell-i}(F_{4p+1})&=(-1)^{\binom{i-1}{2}+\binom{\ell-2i+1}{2}}b^{\ell-2i+1}[(p+1)a]^{i-1},\\
H_{\ell-1}(F_{4p+1})&=(-1)^{\binom{\ell-2i+1}{2}+i-1}b^{\ell-1},\\
H_0(F_{4p+1})&=1,\\
H_j(F_{4p+1})&=0, \text{ for all other } j.
\end{align*}
The theorem then follows by the above initial values, \eqref{e-q-mpl-Fw} and \eqref{e-q-mpl-F1w} .
\end{proof}

\begin{thm}\label{thm411}
Let $F(x)$ be determined by \eqref{e-F-mql}. If $m,\ \ell,\ i, \ q$ are non-negative integers satisfying $\ell=q-i \geq 2,\ m\geq q+2i+1$, then we have
\begin{align*}
 H_{m+1}(F)&=(-1)^{\binom{m+1}{2}}, \\
 H_{(n+1)(m+\ell)}(F)&=(-1)^{n[\binom{m-2i-1}{2}+(i-1)]+(n+1)\binom{\ell-1}{2}+\binom{m+1}{2}}b^{(n+1)^2(m+\ell)-(n+1)(m+1)},\\
 H_{(n+1)(m+\ell)+i+1}(F)&= (-1)^{n[\binom{m-2i-1}{2}+(i-1)]+(n+1)\binom{\ell-1}{2}+\binom{i+1}{2}+\binom{m+1}{2}}\\
   &\qquad b^{(n+1)^2(m+\ell)+n(2i-m+1)+i-m} [(n+1)a]^{i+1},\\
 H_{(n+1)(m+\ell)+m-i}(F)  =&(-1)^{(n+1)[\binom{m-2i-1}{2}+\binom{\ell-1}{2}]+n(i-1)+\binom{i+1}{2}+\binom{m+1}{2}}\\
   &\qquad b^{(n+1)^2(\ell+m)+n(m-2i-1)+m-3i-2}[(n+1)a]^{i+1},\\
  H_{n(m+\ell)+m+1}(F)=& (-1)^{(n+1)[\binom{m-2i-1}{2}+\binom{\ell-1}{2}+i+1]+\binom{m+1}{2}}b^{(n+1)^2(m+\ell)+(n+1)(m+1)},
\end{align*}
and $H_{n}(F)=0$ for all other situations.
\end{thm}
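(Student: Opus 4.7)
The plan is to mimic the proof of Theorem \ref{t-for-Catalan}, since the hypotheses $\ell=q-i\geq 2$ and $m\geq q+2i+1$ again ensure that certain exponents are large enough so that the decomposition $u(x)=u_L(x)+x^{d+2}u_H(x)$ required by Proposition \ref{xinu(0,i,ii)} can be carried out symbolically in the parameters $m,\ell,q,i$. First I would apply Proposition \ref{xinu(0,i,ii)} once to $F_0(x):=F(x)$, whose defining equation \eqref{e-F-mql} fits the form of \eqref{xinF(x)} with $d=m$, $k=\ell$, $u(x)=1-ax^q$ and $v(x)=b$. Since $u(0)=1$ and $k=\ell\geq 2$, case iii) of the proposition gives
\[ H_n(F_0)=(-1)^{\binom{m+1}{2}}H_{n-m-1}(F_1), \qquad F_1(x)=\frac{bx^{\ell-2}}{1-ax^q-x^{m+2}F_1(x)}, \]
which will serve as the base of a shifted periodic cycle.

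Next I would compute $F_2,F_3,F_4,F_5,\dots$ by hand (guided by Maple experiments) until the pattern emerges. Based on the analogous cycle in Theorem \ref{t-for-Catalan}, I expect a period-four shifted cycle $F_{4p+1}\to F_{4p+2}\to F_{4p+3}\to F_{4p+4}\to F_{4(p+1)+1}$ (with $p$ increasing by $1$ after four $\tau$ steps), where the family has the Ansatz
\[ F_{4p+1}(x)=\frac{x^{?}\bigl[p(p+1)a^{2}\cdot(\cdot)+b\cdot(\cdot)+(p+1)a\bigr]}{1+(2p+1)ax^{q}-x^{?}F_{4p+1}(x)} \]
with the hidden exponents determined by balancing $d$ against $q$ and $\ell$. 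For each of the four transformations I would identify the appropriate $d$, expand $u(x)$ as a power series, explicitly check (using $m\geq q+2i+1$ and $\ell=q-i\geq 2$) that the low-order part $u_L(x)$ is a polynomial of degree at most $d+1$ and that the remainder $u_H(x)$ is a true power series, and then read off the constant $u(0)^{-1}$ that produces the scaling factor in the $H$-recursion.

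Combining the four transformations will give a recursion of the shape
\[ H_{n-m-1}(F_{4p+1})=\varepsilon\cdot b^{\,\alpha(n,p,m,\ell,i)}\, H_{n-m-1-(\ell+m)}(F_{4(p+1)+1}), \]
with some explicit sign $\varepsilon$ depending on $\binom{m-2i-1}{2}$, $\binom{\ell-1}{2}$, $\binom{m+1}{2}$ and $(i-1)$. Writing $n-m-1=p(\ell+m)+j$ with $0\leq j<\ell+m$, iterating this recursion reduces everything to the initial data $H_j(F_1)$. The initial computation within one period $0\leq j<\ell+m$ is a finite determinant calculation; the nonzero values I expect to occur are at $j=0$, $j=i$, $j=m-i-1$ and $j=m$ (or $j$ such that the corresponding $n$ hits one of the four listed indices $(n+1)(m+\ell)$, $(n+1)(m+\ell)+i+1$, $(n+1)(m+\ell)+m-i$, $n(m+\ell)+m+1$), with all other $H_j(F_{4p+1})$ vanishing. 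Plugging these back into the recursion and simplifying the resulting products of signs and powers of $b$ and $a$ produces the four displayed formulas.

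The main obstacle will be the combinatorial bookkeeping in two places. First, correctly guessing the exact rational form of $F_{4p+1}(x)$: the coefficients must be such that the four-step cycle really returns to $F_{4(p+1)+1}$ of the same shape, and this requires chasing the constants through each $\tau$ step. Second, once the recursion is in place, I must handle the initial values $H_j(F_{4p+1})$ for $0\leq j<\ell+m$: showing these vanish for all but four values of $j$, and computing the four nonzero ones including their signs. Since the period, the shift, and the exponents of $a$ and $b$ in those initial determinants all interact with $\binom{m+1}{2}$, $\binom{\ell-1}{2}$, $\binom{i+1}{2}$ and $\binom{m-2i-1}{2}$, the sign and exponent accounting is the most error-prone step; I would verify it against small cases in Maple before writing up the final closed forms.
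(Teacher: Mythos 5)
Your plan is essentially the paper's own proof: repeated application of Proposition \ref{xinu(0,i,ii)}, a guessed four-step shifted periodic family whose $u(x)$-decompositions are verified symbolically from $\ell=q-i\geq 2$ and $m\geq q+2i+1$, yielding a recursion with shift $\ell+m$ that reduces everything to four nonzero initial determinants within one period. The only deviations are bookkeeping details you explicitly defer to computation, and there the paper differs slightly from your guesses: its cycle is anchored one step later, at $F_{4p+2}=\frac{x^i(bx^{m-i}+p(p-1)a^2x^q+pa)}{1+(2p-1)ax^q-x^\ell F_{4p+2}}$ after the extra preliminary step $H_{n-m-1}(F_1)=(-1)^{\binom{\ell-1}{2}}b^{n-m-1}H_{n-m-\ell}(F_2)$, and the nonzero initial values occur at $j=0,\ i+1,\ m-i,\ m+1$ relative to $F_{4p+2}$ rather than at your proposed positions.
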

\begin{proof}
Apply Proposition \ref{xinu(0,i,ii)} to $F_0(x):=F(x)$ and repeat the transformation $\tau$. The conditions on $m,\ell,q$ allow us
to compute symbolically. If we define
\[F_{4p+2}=\frac{x^i(bx^{m-i}+n(n-1)a^2x^q+na)}{1+(2n-1)ax^q-x^\ell F_{4p+2}},\]
then the results can be summarized as follows.
\begin{gather}\label{e-q-f0f1w}
  H_n(F_0)=(-1)^{\binom{m+1}{2}}H_{n-m-1}(F_1),\quad  H_{n-m-1}(F_1)=(-1)^{\binom{\ell-1}{2}}b^{n-m-1}H_{n-m-\ell}(F_2).
\end{gather}
and
\begin{align*}
H_{n-m-\ell}(F_{4p+2})&=(-1)^{\binom{i+1}{2}}\left[(p+1)a\right]^{n-m-\ell}H_{n-m-\ell-(i+1)}(F_{4p+3}), \\
 H_{n-m-\ell-(i+1)}(F_{4p+3})&=(-1)^{\binom{m-2i-1}{2}}\left(\frac{b}{(p+1)a}\right)^{n-m-l-(i+1)}\!\!H_{n-m-\ell-(m-i)}(F_{4p+4}),\\
 H_{n-m-\ell-(m-i)}(F_{4p+4})&=(-1)^{\binom{i+1}{2}}\left(-\frac{b}{(p+1)a}\right)^{n-m-\ell-(m-i)}\!\!H_{n-m-\ell-(m+1)}(F_{4p+5}),\\
H_{n-m-\ell-(m+1)}(F_{4p+5})&=(-1)^{\binom{\ell-1}{2}}\left[-(p+1)a\right]^{n-m-\ell-(m+1)}\!H_{n-m-\ell-(m+\ell)}(F_{4p+6}).
\end{align*}

It can be seen that the determinant of $F_{4p+2}$ have a period of $(\ell+m)$,
\[H_{n-m-\ell}(F_{4p+2})=(-1)^{\binom{m-2i-1}{2}+\binom{\ell-1}{2}+i-1}b^{2(n-m-\ell)-(m+1)}H_{n-m-\ell-(\ell+m)}(F_{4p+6}).\]
Let $n-m-\ell=p(\ell+m)+j$, where $0\leq j< \ell+m$. We then deduce that
\begin{align}
  H_{n-m-\ell}(F_2)&=(-1)^{p[\binom{m-2i-1}{2}+\binom{\ell-1}{2}+i-1]}b^{p(p+1)(m+\ell)+p(2j-m-1)}H_j(F_{4p+2}).\label{e-q-mql-2w}
\end{align}
The initial values are
\begin{align*}
  H_0(F_{4p+2})&=1, \\
  H_{i+1}(F_{4p+2})&=(-1)^{\binom{i+1}{2}}[p+1)a]^{i+1},\\
  H_{m-i}(F_{4p+2})&=(-1)^{\binom{i+1}{2}+\binom{m-2i-1}{2}}b^{m-2i-1}[(p+1)a]^{i+1},\\
  H_{m+1}(F_{4p+2})&=(-1)^{\binom{m-2i-1}{2}+i+1}b^{m+1},\\
  H_j(F_{4p+2})&=0 \text{ for all other $j$}.
\end{align*}
The theorem then follows by the above initial values, \eqref{e-q-f0f1w} and \eqref{e-q-mql-2w} .
\end{proof}

\section{Concluding remark}

We have seen that shifted periodic continued fractions appear in our proof of Cigler's Hankel determinant conjectures,
and also appear in Hankel determinants of some path counting numbers. We believe that they will be discovered to appear in more situations.
Basically, nice Hankel determinants of quadratic generating functions may result in shifted periodic continued fractions.

In \cite{J.Cigler-Catalan}, Cigler considered Hankel determinants of $C(x)^r$, which is called the convolution power of the Catalan generating functions.
By using the transformation $\tau$ we have discovered some shifted periodic continued fractions for $H_n(C(x)^r)$ with $r\le 9$, and seems the computation for much larger $r$ is possible. These will appear
in an upcoming paper \cite{Wang-Xin-Catalan-power}.

Another possible research project is to study the nice Hankel determinants in \cite{J.Cigler-C.Krattenthaler} and \cite{C.Krattenthaler-D.Yaqubi},
where the generating functions are quadratic. Can we generalize the step set and still obtain nice Hankel determinants?

Currently, our maple package take the functional equation of $F$ as input, and output the functional equation of $\tau(F)$. To discover
shifted periodic continued fractions, we have to guess manually. One of our next projects is to teach computer to do all these steps automatically.

\medskip
\noindent
\textbf{Acknowledgments}: Part of this work appears in the M.S. thesis \cite{MeiMei} of the third named author. We are grateful to the anonymous referee for suggestions on improving the text and correcting misprints.

\end{document}